 \newtheorem{theo}{Theorem}[section]
 \newtheorem{lemm}[theo]{Lemma}
\newtheorem{prop}[theo]{Proposition}
\newtheorem{rema}[theo]{Remark}
\newcommand\bR{{\mathbb R}}
\newcommand\bS{{\mathbb S}}
\newcommand\NN{{\mathbb N}}
\newcommand\RR{{{\mathbb R}}}
\def\SS {\mathbb{S}}
\def\la{\langle}
\def\ra{\rangle}
\let\a=\alpha
\let\b=\beta
\let\ve=\varepsilon
\let\lam=\lambda
\let\vp=\varphi
\let\p=\psi
\let\o=\omega
\let\s=\sigma
\let\a=\alpha
\let\b=\beta
\let\g=\gamma
\let\ve=\varepsilon
\let\lam=\lambda
\let\p=\psi
\let\o=\omega
\let\dis=\displaystyle
\newcommand\cF{{\mathcal F}}
\newcommand\cS{{\mathcal S}}
\newcommand\cM{{\mathcal M}}
\newcommand\cK{{\mathcal K}}
\newcommand\cG{{\mathcal G}}
\newcommand\cX{{\mathcal X}}
\def\vb{\mathbf b}
\def\ve{\mathbf e}
\date{today}
\begin{document}
\title[New characterization of Infinite Energy Solutions]
{A New characterization and global regularity \\  of  infinite energy solutions\\  to the homogeneous Boltzmann equation  
}
\author{Yoshinori Morimoto }
\address{Yoshinori Morimoto, Graduate School of Human and Environmental Studies,
Kyoto University,
Kyoto, 606-8501, Japan} \email{morimoto@math.h.kyoto-u.ac.jp}
\author{Shuaikun Wang}
\address{Shuaikun Wang , Department of mathematics, City University of Hong Kong,
Hong Kong, P. R. China
} \email{shuaiwang4-c@my.cityu.edu.hk}
\author{Tong Yang}
\address{Tong Yang, Department of mathematics, City University of Hong Kong,
Hong Kong, P. R. China; \& Department of Mathematics, Shanghai Jiao Tong University, Shanghai, P. R. China
} \email{matyang@cityu.edu.hk}

\subjclass[2010]{primary 35Q20, 76P05, secondary  35H20, 82B40, 82C40, }

\keywords{Boltzmann equation, smoothing effect,
measure valued initial datum, characteristic functions.}

\date{}

\begin{abstract}
The purpose of this paper is to introduce a new characterization of the characteristic functions
for the study on the measure valued solution to the homogeneous Boltzmann equation so that it precisely captures
the  moment
constraint in physics. This significantly improves the previous result  by Cannone-Karch [CPAM 63(2010), 747-778] 
in the sense that the new characterization gives a complete description
of infinite energy solutions for the  Maxwellian cross section. In addition,  the  global in time smoothing effect of  
the infinite energy solution except for a single Dirac mass initial datum is justified
as for the finite energy solution. 
\end{abstract}
\maketitle

\section{Introduction}\label{s1}

Consider the spatially homogeneous Boltzmann equation,
\begin{equation}\label{bol}
\partial_t f(t,v) 
=Q(f, f)(t,v),
\end{equation}
where $f(t,v)$ is the density distribution of particles with 
velocity $v \in \RR^3$ at time $t$. The most interesting and
important part of this equation is the collision operator
given on the
 right hand side that captures the change rates of the density distribution
through elastic binary collisions:
\[
Q(g, f)(v)=\int_{\RR^3}\int_{\mathbb S^{2}}B\left({v-v_*},\sigma
\right)
 \left\{g(v'_*) f(v')-g(v_*)f(v)\right\}d\sigma dv_*\,,
\]
where for $\sigma \in \SS^2$
$$
v'=\frac{v+v_*}{2}+\frac{|v-v_*|}{2}\sigma,\,\,\, v'_*
=\frac{v+v_*}{2}-\frac{|v-v_*|}{2}\sigma,\,
$$
that follow from the conservation of momentum and energy,
\[ v' + v_*' = v+ v_*, \enskip |v'|^2 + |v_*'|^2 = |v|^2 + |v_*|^2.
\]
In this paper, we consider
the Cauchy problem of \eqref{bol} with initial datum
\begin{equation}\label{initial}
f(0,v) = F_0\ge 0.
\end{equation}

Motivated by some physical models, we assume that the non-negative cross section 
$B$  takes the form
\begin{equation*}
B(|v-v_*|, \cos \theta)=\Phi (|v-v_*|) b(\cos \theta),\,\,\,\,\,
\cos \theta=\frac{v-v_*}{|v-v_*|} \, \cdot\,\sigma\, , \,\,\,
0\leq\theta\leq\frac{\pi}{2},
\end{equation*}
where
\begin{align}
&\Phi(|z|)=\Phi_\gamma(|z|)= |z|^{\gamma}, \enskip \mbox{for some $\gamma>-3$},   \notag \\
& b(\cos \theta)\theta^{2+2s}\ \rightarrow K\ \
 \mbox{when} \ \ \theta\rightarrow 0+,  \enskip 
\mbox{for $0<s<1$ and $K>0$. }\label{1.2}
\end{align}
Throughout this paper, we will only consider the case when
\[
\gamma = 0,\enskip 0<s<1\,,
\]
which is called Maxwellian molecule type cross section, because the analysis
relies on the simpler form of the equation after taking Fourier transform
in $v$ by the Bobylev formula. 
As usual, the range of $\theta$ can be restricted to $[0,\pi/2]$, by replacing $b(\cos\theta)$
 by its ``symmetrized'' version  
\[
[ b(\cos \theta)+b(\cos (\pi-\theta))]{\bf 1}_{0\le \theta\le \pi/2}.
\]

It is now well known that the angular singularity in the cross section leads
to the gain of regularity in the solution. The purpose of this paper is to
show that this still holds for measure valued solutions. Moreover, we 
improve the previous work on existence theory by Cannonne-Karch \cite{Cannone-Karch-1} because we introduce a new classification of the characteristic functions
so that the moment constraints can be precisely captured in the Fourier
space.

We start with the following a slightly general assumption on the cross section
\begin{align}\label{index-sing}
\exists \alpha_0 \in (0, 2]\enskip\mbox{ such that} \enskip  (\sin \theta/2)^{\alpha_0} b(\cos \theta) 
\sin \theta  \in L^1((0, \pi/2]),
\end{align}
which is fulfilled for $b$ with \eqref{1.2} if $2s < \alpha_0$.

Denote by $P_\alpha(\RR^d)$, $0 \le \alpha \le 2$ the  probability measures  $F$ on $\RR^d$, $d \ge 1$, such that
\[
\int_{\RR^d} |v|^\alpha dF(v) < \infty, \,
\] 
and moreover when {$\a \ge 1$ }, it requires that 
\begin{align}\label{mean}
\int_{\RR^d} v_j dF(v) = 0, \enskip j =1,\cdots d\,.
\end{align}
Following Cannone-Karch \cite{Cannone-Karch-1},  the Fourier transform of a probability measure $F \in P_0(\RR^d)$ called a characteristic function
is defined by
by 
\[ 
\varphi(\xi) = \hat f(\xi) = \cF(F)(\xi) =\int_{\RR^d} e^{-iv\cdot \xi} dF(v).
\]
Put 
$\cK = \cF(P_0(\RR^d))$.
Inspired by a series of works by Toscani and co-authors\cite{Carlen-Gabetta-Toscani, Gabetta-Toscani-Wennberg, toscani-villani}, Cannone-Karch defined a subspace 
$\cK^\a$ for $\a \ge 0$  as follows:
\begin{align}\label{K-al}
\cK^\alpha =\{ \varphi \in \cK\,;\, \|\varphi - 1\|_{\alpha} < \infty\}\,,
\end{align}
where 
\begin{align}\label{dis-norm}
\|\varphi - 1\|_{\alpha} = \sup_{\xi \in \RR^d} \frac{|\varphi(\xi) -1|}{|\xi|^\alpha}.
\end{align}
The space $\cK^\alpha $ endowed with the distance 
\begin{align}\label{distance}
\|\varphi - \tilde \varphi\|_{\alpha} = \sup_{\xi \in \RR^d} \frac{|\varphi(\xi) -
\tilde \varphi(\xi) |}{|\xi|^\alpha}
\end{align}
is a complete metric space (see Proposition 3.10 of \cite{Cannone-Karch-1}).
It follows that $\cK^\alpha =\{1\}$ for all $\alpha >2$ and the 
following embeddings
(Lemma 3.12 of \cite{Cannone-Karch-1}) hold
\[
\{1\} \subset \cK^\alpha \subset \cK^\beta \subset \cK^0 =\cK \enskip \enskip
\mbox{for all $2\ge \alpha \ge \beta \ge 0$}\,\,.
\]
With this classification on the characteristic functions, the global
existence of solution in $\cK^\alpha$ was studied in \cite{Cannone-Karch-1}(see also \cite{morimoto-12}).
However, 
even though the inclusion $\cF(P_\a(\RR^d)) \subset \cK^\a$ holds (see Lemma 3.15 of \cite{Cannone-Karch-1}), 
the space $\cK^\a$ is strictly bigger than $\cF(P_\a(\RR^d))$ for $\a \in (0,2)$, in other word, $\cF^{-1}(\cK^\a) \supsetneq P^\a(\RR^d)$.  Indeed,  it is shown (see Remark 3.16 of \cite{Cannone-Karch-1}) that the function 
$\varphi_\a (\xi) = e^{-|\xi|^\a}$, with $\a \in (0,2)$, belongs to $\cK^\a$ but $p_\a(v)= \cF^{-1} (\varphi_\a)(v)$, the density of $\a-$stable symmetric L\'evy process, 
is not contained in $P_\a(\RR^d)$. 
Hence, the solution obtained in the function space $\cK^\alpha$ does not
represent the moment properties in physics even when it is assumed initially.

In order to fill this gap, one main purpose of
this paper is to introduce a new classification on the characteristic functions
as follows. Set 
\begin{align}\label{M-al}
\cM^\alpha =\{ \varphi \in \cK\,;\, \|\varphi - 1\|_{\cM^\alpha} < \infty\}\,,\enskip \a \in (0,2)\,,
\end{align}
where 
\begin{align}\label{integral-norm}
\|\varphi -1\|_{\cM^\a}  = \int_{\RR^d} \frac{|\varphi(\xi)-1 |}{|\xi|^{d+\a} }d\xi\,.
\end{align}
For $\varphi, \tilde \varphi \in \cM^\a$, put 
\begin{align*}
\|\varphi -\tilde \varphi \|_{\cM^\a}  = \int_{\RR^d} \frac{|\varphi(\xi)-\tilde \varphi(\xi) |}{|\xi|^{d+\a} }d\xi\,,
\end{align*}
and, for any $\b \in (0,\a]$, we introduce the distance 
\begin{align}\label{M-dist}
{dis}_{\a,\b} (\varphi ,\tilde \varphi )= \|\varphi - \tilde \varphi\|_{\cM^\a}   + \|\varphi - \tilde \varphi\|_\b\,.
\end{align}

With the above notations, 
 the first main result in this paper can be stated as follows.

\begin{theo}\label{complete-1}  If $0 < \b \le \alpha < 2$, then    
the space $\cM^\a$ is a complete metric space endowed with the distance $
dis_{\a,\b} (\varphi ,\tilde \varphi )$. If $\b, \b'$ are  in $(0,\a]$,  both distances  $dis_{\a,\b} (\cdot, \cdot)$ and $dis_{\a,\b'} (\cdot, \cdot)$ are equivalent
in the following sense:
\begin{align}\label{fourier-equiv}
\mbox{$\forall \varepsilon >0$,  $\exists \delta>0$ s.t. $dis_{\a,\b} (\varphi ,\varphi_0 ) < \delta$  $\Rightarrow$ $dis_{\a,\b'} (\varphi , \varphi_0 )< \varepsilon $.}
\end{align}
Moreover, we have
\begin{align}\label{trivial-inclusion}
&\cK^\b \subset \cM^\a \enskip \mbox{if $\a < \b$ and $\a \in (0,2)$,}\\
&\label{inclusion-spa}
\cM^\a \subset \cF(P_\a(\RR^d)) \, \big (\, \,  \subsetneq \cK^\a \,\, \big) \enskip \mbox{for} \enskip \a \in (0,2)\,,\\
&\label{equivalence-space}
\cM^\a = \cF(P_\a(\RR^d))\,, \enskip \mbox{furthermore if  $\a \ne 1$ },
\end{align}
and the fact that $\displaystyle \lim_{n \rightarrow \infty} dis_{\a,\b}(\varphi_n, \varphi) = 0$, for $\varphi_n, 
\varphi 
 \in \cM^\a$,
implies
\begin{align}\label{measure-convergence}
&\lim_{n \rightarrow \infty} \int \psi(v) dF_n(v) = \int \psi(v) dF(v)  
\mbox{ for any  $\psi \in C(\RR^d)$} \\
&\mbox{\qquad  \qquad \qquad 
 satisfying 
the growth condition $|\psi(v)| \lesssim \la v \ra^\a$,}\notag
\end{align}
where $F_n = \cF^{-1}(\varphi_n), F = \cF^{-1}(\varphi) \in P_\a(\RR^d)$.
\end{theo}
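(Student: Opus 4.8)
The backbone of all the assertions is the classical Bochner/L\'evy--Khintchine identity
\[
\int_{\RR^d}\frac{1-\cos(v\cdot\xi)}{|\xi|^{d+\alpha}}\,d\xi=C(d,\alpha)\,|v|^\alpha,\qquad v\in\RR^d,\ \alpha\in(0,2),
\]
with $C(d,\alpha)>0$ (finite by homogeneity and rotation invariance), used together with the elementary inequalities $|e^{-ix}-1|\le\min(2,|x|)\le 2^{1-\alpha}|x|^\alpha$ for $0<\alpha\le1$ and $|e^{-ix}-1+ix|\le\min(\tfrac12 x^2,2|x|)\le 2^{2-\alpha}|x|^\alpha$ for $1\le\alpha<2$. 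To prove $\cM^\alpha\subset\cF(P_\alpha(\RR^d))$ in \eqref{inclusion-spa}, let $\varphi=\cF(F)\in\cM^\alpha$; since $\re(1-\varphi(\xi))=\int(1-\cos(v\cdot\xi))\,dF(v)\ge0$ and $|\varphi-1|\ge\re(1-\varphi)$, Tonelli's theorem and the identity give $C(d,\alpha)\int|v|^\alpha dF=\int \re(1-\varphi(\xi))\,|\xi|^{-d-\alpha}d\xi\le\|\varphi-1\|_{\cM^\alpha}<\infty$, so $F$ has finite $\alpha$-moment. If moreover $\alpha\ge1$, then $\int|v|\,dF<\infty$ forces $\varphi\in C^1$ near $0$ with $\varphi(\xi)-1=-im\cdot\xi+o(|\xi|)$, $m=\int v\,dF$; were $m\ne0$, on a fixed solid cone around the origin one would have $|\varphi(\xi)-1|\ge c|m|\,|\xi|$, and since $\int_0^\delta r^{-\alpha}\,dr=+\infty$ for $\alpha\ge1$ this would contradict $\|\varphi-1\|_{\cM^\alpha}<\infty$; hence $m=0$ and $\varphi\in\cF(P_\alpha)$ (strictness $\cF(P_\alpha)\subsetneq\cK^\alpha$ being quoted from \cite{Cannone-Karch-1}). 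For the converse when $\alpha\ne1$: given $F\in P_\alpha$, write $\varphi(\xi)-1=\int(e^{-iv\cdot\xi}-1)\,dF$ for $\alpha<1$ and $\varphi(\xi)-1=\int(e^{-iv\cdot\xi}-1+iv\cdot\xi)\,dF$ for $\alpha>1$ (using zero mean); the elementary bounds and Tonelli give $\|\varphi-1\|_{\cM^\alpha}\le c_\alpha\int|v|^\alpha dF<\infty$, because the scalar integral $\int\min(2,|v||\xi|)\,|\xi|^{-d-\alpha}d\xi$ (resp. $\int\min(|v|^2|\xi|^2,|v||\xi|)\,|\xi|^{-d-\alpha}d\xi$) is finite exactly for $\alpha\in(0,1)$ (resp. $\alpha\in(1,2)$) and equals a constant times $|v|^\alpha$ by scaling. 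This is \eqref{equivalence-space}; the same computation shows the borderline scalar integral diverges logarithmically at $\alpha=1$, so this method yields only $\cM^1\subset\cF(P_1)$.

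For \eqref{trivial-inclusion}, if $\varphi\in\cK^\beta$ with $\alpha<\beta$ then $|\varphi-1|\le\min(2,\|\varphi-1\|_\beta|\xi|^\beta)$, and both $\int_{|\xi|<1}|\xi|^{\beta-d-\alpha}d\xi$ ($\beta>\alpha$) and $\int_{|\xi|>1}|\xi|^{-d-\alpha}d\xi$ ($\alpha>0$) converge, so $\varphi\in\cM^\alpha$. For completeness of $(\cM^\alpha,dis_{\alpha,\beta})$ with $\beta\le\alpha$, note first $\cM^\alpha\subset\cF(P_\alpha)\subset\cF(P_\beta)\subset\cK^\beta$ (the middle inclusion because the moment and zero-mean constraints nest as $\beta\le\alpha$, the last from Lemma~3.15 of \cite{Cannone-Karch-1}); thus a $dis_{\alpha,\beta}$-Cauchy sequence $\{\varphi_n\}$ is Cauchy in the complete metric space $\cK^\beta$ and converges in $\|\cdot\|_\beta$, hence pointwise, to some $\psi\in\cK^\beta$. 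Applying Fatou to the $\cM^\alpha$-Cauchy condition (letting the second index tend to $\infty$) gives $\|\varphi_n-\psi\|_{\cM^\alpha}\to0$, so $\psi\in\cM^\alpha$ and $dis_{\alpha,\beta}(\varphi_n,\psi)\to0$.

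The heart of the matter, and the step I expect to be the main obstacle, is \eqref{fourier-equiv}. Write $g_n=\varphi_n-\varphi_0=\cF(F_n-F_0)$ with $\varphi_n\to\varphi_0$ in $dis_{\alpha,\beta}$, so $\|g_n\|_{\cM^\alpha}\to0$. The first paragraph's estimate and $\sup_n\|\varphi_n-1\|_{\cM^\alpha}<\infty$ give $\sup_n\int|v|^\alpha dF_n<\infty$, hence $K:=\sup_n\int|v|^\alpha d|F_n-F_0|<\infty$ and $M_n:=\|g_n\|_\alpha\le c_\alpha K$. For $0<\beta'<\alpha$ the pointwise H\"older interpolations $\|g_n\|_{\beta'}\le\|g_n\|_\beta^{(\alpha-\beta')/(\alpha-\beta)}M_n^{(\beta'-\beta)/(\alpha-\beta)}$ (if $\beta<\beta'$) and $\|g_n\|_{\beta'}\le\|g_n\|_0^{1-\beta'/\beta}\|g_n\|_\beta^{\beta'/\beta}$ (if $\beta'\le\beta$), combined with $\|g_n\|_0\le2$, $\|g_n\|_\beta\to0$, $\sup_n M_n<\infty$, already give $\|g_n\|_{\beta'}\to0$; the only genuinely new case is $\beta'=\alpha$, i.e. one must prove $M_n\to0$. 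Here the key is a ``bump'' estimate: choose $\xi_n^\ast$ with $|g_n(\xi_n^\ast)|\ge\tfrac34 M_n|\xi_n^\ast|^\alpha$. For $\alpha>1$, using the zero means and the pointwise bound $|v|\min(2,|v||\xi|)\le 2^{3-\alpha}|v|^\alpha|\xi|^{\alpha-1}$ one obtains the \emph{sharpened} derivative estimate
\[
|g_n'(\xi)|=\Big|\int iv\big(1-e^{-iv\cdot\xi}\big)\,d(F_n-F_0)(v)\Big|\le 2^{3-\alpha}|\xi|^{\alpha-1}\int|v|^\alpha d|F_n-F_0|\le L_0|\xi|^{\alpha-1},
\]
(for $\alpha\le1$ one uses instead the uniform H\"older-$\alpha$ bound $|g_n(\xi)-g_n(\eta)|\le 2^{1-\alpha}K|\xi-\eta|^\alpha$); this forces $|g_n|\ge\tfrac14 M_n|\xi_n^\ast|^\alpha$ on a ball $B(\xi_n^\ast,r_n)$ whose radius is \emph{comparable to} $|\xi_n^\ast|$ (concretely $r_n\asymp M_n^{1/\min(1,\alpha)}|\xi_n^\ast|$ once $M_n$ is small), rather than the much smaller $M_n|\xi_n^\ast|^\alpha$ that a naive uniform Lipschitz bound would give. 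Integrating $|\xi|^{-d-\alpha}$ over this ball yields $\|g_n\|_{\cM^\alpha}\ge c_\ast M_n^{1+d/\min(1,\alpha)}$ whenever $M_n$ lies below a fixed threshold, and $\|g_n\|_{\cM^\alpha}\ge c_{\ast\ast}>0$ otherwise; since the left side tends to $0$, $M_n\to0$. The resulting quantitative estimate $\|\varphi-\varphi_0\|_\alpha\lesssim_{\varphi_0,d,\alpha}\|\varphi-\varphi_0\|_{\cM^\alpha}^{1/(1+d/\min(1,\alpha))}$, valid uniformly once $\|\varphi-\varphi_0\|_{\cM^\alpha}$ is small enough that $\int|v|^\alpha dF$ stays bounded, together with the interpolations above turns this into the stated $\delta$-$\varepsilon$ assertion. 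The delicate point throughout is the sharp derivative bound $|g_n'(\xi)|\lesssim|\xi|^{\alpha-1}$ for $\alpha\in(1,2)$: without it the bumps would be too narrow to be detected by the $\cM^\alpha$-integral and the argument would collapse.

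Finally, for \eqref{measure-convergence}: $\|\varphi_n-\varphi\|_\beta\to0$ gives $\varphi_n\to\varphi$ pointwise, hence $F_n\to F$ weakly by L\'evy's continuity theorem; and $\|\varphi_n-\varphi\|_{\cM^\alpha}\to0$ together with $\int \re(\varphi_n(\xi)-\varphi(\xi))\,|\xi|^{-d-\alpha}d\xi=C(d,\alpha)\big(\int|v|^\alpha dF-\int|v|^\alpha dF_n\big)$ gives convergence of the $\alpha$-moments, i.e. $\int(1+|v|^\alpha)\,dF_n\to\int(1+|v|^\alpha)\,dF$. Weak convergence together with convergence of the moments of the strictly positive continuous weight $1+|v|^\alpha$ is the classical criterion upgrading $\int\psi\,dF_n\to\int\psi\,dF$ from $\psi\in C_b(\RR^d)$ to all $\psi\in C(\RR^d)$ with $|\psi(v)|\lesssim\langle v\rangle^\alpha$ (split $\psi=\psi\chi_R+\psi(1-\chi_R)$ with a continuous cutoff $\chi_R$, treat the first term by weak convergence and dominate the tails of the second by $1+|v|^\alpha$ using the moment convergence), which is precisely \eqref{measure-convergence}.
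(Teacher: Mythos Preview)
Your argument is correct throughout, and for most parts (completeness, the inclusions \eqref{trivial-inclusion}--\eqref{equivalence-space}, and \eqref{measure-convergence}) you follow essentially the same route as the paper, with only cosmetic differences: the paper packages the L\'evy--Khintchine identity as the truncated lower bound $\int_{|v|\ge R}|v|^\alpha dF \le (2c_{\alpha,d,1})^{-1}\int_{|\xi|\le 1/R}|1-\varphi|\,|\xi|^{-d-\alpha}d\xi$ (Proposition~\ref{chara-2}), and it kills the nonzero-mean case by shifting $F$ to its centered version rather than by a cone argument, but the content is the same.

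The genuine divergence is in \eqref{fourier-equiv}. The paper never attempts a quantitative bound of the form $\|\varphi-\varphi_0\|_\alpha\lesssim\|\varphi-\varphi_0\|_{\cM^\alpha}^{\theta}$; instead it writes $(\varphi-\varphi_0)(\xi)/|\xi|^\alpha$ as an integral against $dF-dF_0$, splits $\RR^d_v$ at a radius $\tilde R$, and controls the large-$|v|$ piece \emph{via the truncated moment estimate above} applied separately to $F$ and $F_0$, while the small-$|v|$ piece vanishes as $|\xi|\to 0$; the $\|\cdot\|_\beta$ part of the hypothesis is used only to handle $|\xi|\ge R^{-1}$. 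Your bump argument is different: you locate a near-extremizer $\xi_n^\ast$, use the sharpened regularity $|g_n'(\xi)|\lesssim K|\xi|^{\alpha-1}$ (resp.\ $\alpha$-H\"older for $\alpha\le 1$) coming from the uniform $\alpha$-moment bound to show $|g_n|$ stays of size $M_n|\xi_n^\ast|^\alpha$ on a ball of radius $\asymp (M_n/K)^{1/\min(1,\alpha)}|\xi_n^\ast|$, and integrate. Because $M_n\le c_\alpha K$ always, this radius is automatically $\lesssim|\xi_n^\ast|$, so no separate ``$M_n$ large'' case is needed; the inequality $\|g_n\|_{\cM^\alpha}\ge c_\ast(K)\,M_n^{1+d/\min(1,\alpha)}$ holds unconditionally and gives $M_n\to 0$ without ever invoking $\|g_n\|_\beta$. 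This is sharper than what the paper proves (you get a power-type modulus and show that the $\cM^\alpha$ part of the distance alone controls $\|\cdot\|_\alpha$), at the cost of being somewhat less transparent. Conversely, the paper's truncated tail estimate is a reusable tool: it reappears verbatim in Section~\ref{s-34} when proving time-continuity of the measure-valued solution, and it drives their proof of \eqref{measure-convergence} through uniform integrability rather than through convergence of the $\alpha$-moments as you do. Both approaches are valid.
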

\begin{rema}\label{remark-additional}
If $F \in P_1(\RR^d)$  satisfies 
$
\int_{\RR^d} \la v \ra \log \la v \ra dF_0(v) < \infty,
$
then we have $\cF(F) \in \cM^1$ (see Proposition \ref{chara-1} below).
\end{rema}
{\begin{rema}\label{Wasserstein}
The weak convergence \eqref{measure-convergence} is equivalent to the one
given by the Wasserstein distance (see Theorem 7.12 of \cite{villani3}). 
\end{rema}}

Thanks to the  new characterization of $P_\a$ by its exact Fourier image $\cM^\a$, we can improve results, given in 
\cite{Cannone-Karch-1, morimoto-12, MY},  concerning the existence and the smoothing effect of measure valued
solutions to the Cauchy problem for the 
spatially homogeneous Boltzmann equation of Maxwellian molecule type cross section without angular cutoff that will be stated as follows.

\begin{theo}\label{existence-base-space}
Assume that $b$ satisfies \eqref{index-sing} for some $\a_0 \in (0,2)$ and let  $\a \in [\a_0, 2), \a \ne 1$. 
If $F_0 \in P_\a(\RR^3)$, then there exists a unique measure valued solution $F_t \in C([0,\infty), P_\a(\RR^3))$ to the Cauchy problem 
\eqref{bol}-\eqref{initial}, where the continuity with respect to $t$ is according to  the topology deduced in the sense of \eqref{measure-convergence}.
\end{theo}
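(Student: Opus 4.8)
The plan is to transfer the Cauchy problem to Fourier side via the Bobylev identity, work in the complete metric space $(\cM^\a, dis_{\a,\b})$ furnished by Theorem \ref{complete-1}, and run a Banach–type fixed point / a~priori estimate argument there. First I would recall that under the Fourier transform $\varphi_t = \cF(F_t)$, the equation \eqref{bol} for Maxwellian molecules becomes the Bobylev equation
\[
\partial_t \varphi(t,\xi) = \int_{\SS^{2}} b\!\left(\tfrac{\xi}{|\xi|}\cdot\sigma\right)\Bigl\{\varphi(t,\xi^+)\varphi(t,\xi^-) - \varphi(t,\xi)\varphi(t,0)\Bigr\} d\sigma =: \widehat{Q}(\varphi,\varphi)(t,\xi),
\]
with $\xi^\pm = (\xi \pm |\xi|\sigma)/2$, and $\varphi(t,0)\equiv 1$ since mass is conserved. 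Since $F_0 \in P_\a(\RR^3)$ with $\a \ge \a_0$ and $\a \ne 1$, by \eqref{equivalence-space} we have $\varphi_0 = \cF(F_0) \in \cM^\a$, so it makes sense to seek the solution in $C([0,\infty),\cM^\a)$.

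Second, the heart of the matter is a stability/contraction estimate for $\widehat Q$ in the $\cM^\a$–norm: for $\varphi, \tilde\varphi \in \cM^\a$ with $\varphi(t,0)=\tilde\varphi(t,0)=1$ one wants
\[
\|\widehat{Q}(\varphi,\varphi) - \widehat{Q}(\tilde\varphi,\tilde\varphi)\|_{\cM^\a} \lesssim \|\varphi-\tilde\varphi\|_{\cM^\a},
\]
and similarly for the auxiliary distance $\|\cdot\|_\b$ so that the full $dis_{\a,\b}$ is controlled; the constant comes from $\int_{\SS^2} b(\cos\theta)(\sin(\theta/2))^{\alpha_0}\,d\sigma < \infty$, which holds by \eqref{index-sing}. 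The point is that after the change of variables $\xi \mapsto \xi^\pm$, $|\xi^-| = |\xi|\sin(\theta/2)$ and $|\xi^+| = |\xi|\cos(\theta/2)$, and one estimates $|\varphi(\xi^+)\varphi(\xi^-) - 1|$ by splitting into $|\varphi(\xi^+)||\varphi(\xi^-)-1| + |\varphi(\xi^+)-1|$, using $|\varphi| \le 1$; dividing by $|\xi|^{d+\a}$ and integrating, the factors $\sin(\theta/2)^{-(d+\a)}$, resp. $\cos(\theta/2)^{-(d+\a)}$, from the Jacobian are absorbed, the first by the integrability \eqref{index-sing} (using $\alpha_0 \le \a$ would need care — in fact one uses $2s<\alpha_0\le\alpha$ together with the Maxwellian cutoff $\theta\le\pi/2$ which keeps $\cos(\theta/2)$ bounded below) and produces the claimed bound. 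I would then obtain local-in-time existence and uniqueness by a standard Picard iteration on $C([0,T],\cM^\a)$ for $T$ small, the iterates being equi-Lipschitz in $t$ into $(\cM^\a, dis_{\a,\b})$.

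Third, to globalize I would show the $\cM^\a$–norm does not blow up in finite time: testing the equation against $|\xi|^{-(d+\a)}$ and using the same splitting yields a Gronwall inequality $\frac{d}{dt}\|\varphi_t - 1\|_{\cM^\a} \le C\|\varphi_t-1\|_{\cM^\a}$, so the local solution extends to $[0,\infty)$ and stays in $P_\a(\RR^3)$ via \eqref{inclusion-spa}; the continuity in $t$ in the sense of \eqref{measure-convergence} follows from continuity in $dis_{\a,\b}$ together with the last assertion of Theorem \ref{complete-1}. Uniqueness is immediate from the contraction estimate and Gronwall. The main obstacle I anticipate is the $\cM^\a$ stability estimate for $\widehat Q$: unlike the sup–norm $\|\cdot\|_\alpha$ used by Cannone–Karch, the integral norm $\|\cdot\|_{\cM^\a}$ requires controlling an integral over $\xi$ of a quadratic nonlinearity after the $\xi\mapsto\xi^\pm$ substitution, so one must carefully track the interplay between the Jacobian singularities at $\theta\to 0$ and the angular integrability, and handle the near-grazing regime; the exclusion $\a \ne 1$ is exactly what lets us invoke \eqref{equivalence-space} so that the solution's Fourier transform genuinely characterizes a measure in $P_\a$.
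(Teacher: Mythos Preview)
Your overall architecture --- pass to the Bobylev equation, identify $P_\a$ with $\cM^\a$ via Theorem~\ref{complete-1}, and run a fixed-point/Gronwall argument in $(\cM^\a,dis_{\a,\b})$ --- matches the paper. The gap is in the heart of the matter, the $\cM^\a$ estimate for $\widehat Q$.

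Your splitting $\varphi(\xi^+)\varphi(\xi^-)-\varphi(\xi)$ into pieces controlled by $|\varphi(\xi^-)-1|$ and $|\varphi(\xi^+)-1|$ (and analogously for differences) is exactly what the paper does in the \emph{cutoff} case (Lemma~\ref{lemm01}), and it fails without cutoff. After the change of variables $\xi\mapsto\xi^+$ the net weight is $b(\cos\theta)\cos^\a(\theta/2)\sin\theta$; since $\cos^\a(\theta/2)\to 1$ as $\theta\to 0$, the angular integral is just $\int_0^{\pi/2} b(\cos\theta)\sin\theta\,d\theta=\infty$. Your remark that ``$\theta\le\pi/2$ keeps $\cos(\theta/2)$ bounded below'' is precisely the problem, not the cure: boundedness from below gives no gain of a power of $\theta$ to absorb the singularity of $b$. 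Only the $\xi^-$ piece picks up $\sin^\a(\theta/2)$ and is handled by~\eqref{index-sing}. Consequently neither your Lipschitz estimate for $\widehat Q$ nor the Gronwall bound $\partial_t\|\varphi_t-1\|_{\cM^\a}\le C\|\varphi_t-1\|_{\cM^\a}$ follows from the splitting you propose; the constant $C$ you would obtain is infinite.

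The paper repairs this in two ways. For existence and the stability estimate~\eqref{04} it does \emph{not} argue directly in the non-cutoff setting: it solves the cutoff problems $b_n=\min(b,n)$ by your fixed-point scheme (where $\gamma_{\a,n}<\infty$), extracts a limit $\varphi$ by Ascoli--Arzel\`a (via \cite{morimoto-12}), and passes the cutoff estimate $\|\varphi_n-\tilde\varphi_n\|_{\cM^\a}\le e^{\lambda_{\a,n}t}\|\varphi_0-\tilde\varphi_0\|_{\cM^\a}$ to the limit; the point is that $\lambda_{\a,n}\le\lambda_\a<\infty$ because the combination $\cos^\a(\theta/2)+\sin^\a(\theta/2)-1\sim(\theta/2)^\a$ carries the needed cancellation even though $\gamma_\a$ and $\gamma_2$ individually diverge. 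For the time-continuity estimate~\eqref{second} (needed to conclude $\varphi\in C([0,\infty),\cM^\a)$), the paper proves a genuine non-cutoff bound (Lemma~\ref{main-M-lemma}) by a symmetrization trick: writing $\zeta=(\xi^+\cdot\xi/|\xi|)\,\xi/|\xi|$ and $\tilde\xi^+$ the reflection of $\xi^+$ through $\zeta$, one decomposes
\[
\varphi(\xi^+)-\varphi(\xi)=\tfrac12\bigl(\varphi(\xi^+)+\varphi(\tilde\xi^+)-2\varphi(\zeta)\bigr)+\bigl(\varphi(\zeta)-\varphi(\xi)\bigr),
\]
the first bracket being a second difference that gains an extra $\sin^\a(\theta/2)$, and the second being controlled via the pointwise inequality $|\varphi(\xi)-\varphi(\xi+\eta)|\le 4|1-\varphi(\xi)|^{1/2}|1-\varphi(\eta)|^{1/2}+|1-\varphi(\eta)|$ with $\eta=\zeta-\xi$, $|\eta|=|\xi|\sin^2(\theta/2)$. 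This cancellation is what your proposal is missing; once you replace your naive splitting by this decomposition (or, for existence, by the cutoff-then-limit route), the rest of your plan goes through.
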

\begin{rema}\label{remark-1}
When $F_0 \in P_1(\RR^3)$, the result is slightly weaker, that is, for any $T >0$ there exists a constant $C_T >0$ such that
\begin{equation}\label{1-mome}
\sup_{0 \le t \le T} \int_{\RR^3} \la v \ra dF_t(v) \le C_T.
\end{equation}
Moreover,  if $F_0 \in P_1(\RR^3)$ satisfies 
$
 \int_{\RR^3} \la v \ra \log \la v \ra dF_0(v) < \infty,
$
then  the solution $F_t \in C([0,\infty), P_1(\RR^3))$. 
\end{rema}

The proof of the above theorem is given in the Fourier space. In fact, 
by letting $\varphi(t,\xi) = \int_{\RR^3}e^{-iv\cdot \xi} dF_t(v)$ and $\varphi_0= \cF(F_0)$, 
it follows from the Bobylev formula that
the Cauchy problem \eqref{bol}-\eqref{initial} is reduced to 
\begin{equation}\label{c-p-fourier}
\left \{ 
\begin{array}{l}\dis \partial_t \varphi(t,\xi)
=\int_{\SS^2}b\left(\frac{\xi \cdot \sigma}{|\xi|}\right) \Big( \varphi(t,\xi^+)\varphi(t, \xi^-) - \varphi(t, \xi)
\varphi(t,0)\Big) d\sigma, \\\\
\dis \varphi(0,\xi)=\varphi_0(\xi), \enskip \mbox{where} \enskip
\dis \xi^\pm = \frac{\xi}{2} \pm \frac{|\xi|}{2} \sigma\,.
\end{array}
\right.
\end{equation}
By  Theorem \ref{complete-1}, to prove Theorem \ref{existence-base-space}
it suffices to show 
\begin{theo}\label{fourier-space}
Assume that $b$ satisfies \eqref{index-sing} for some $\a_0 \in (0,2)$ and let  $\a \in [\a_0, 2)$. 
If the initial datum $\varphi_0$ belongs to $\cM^\a$, then there exists a unique classical solution $\varphi(t,\xi) \in C([0,\infty), \cM^\a)$
to the Cauchy problem \eqref{c-p-fourier} such that
\begin{align}\label{fourier-continuity}
dis_{\a,\a} (\varphi(t,\cdot), \varphi(s,\cdot))\lesssim |t-s| e^{\lambda_\a \max \{t,s\}}dis_{\a,\a} (\varphi_0, 1)\,.
\end{align}
Here 
\begin{equation}\label{05}
  \lam_\a= 2 \pi \int_0^{\pi/2} b\left( \cos \theta \right)
             \left(\cos^\a\frac\theta2+\sin^\a\frac\theta2-1\right) \sin \theta d \theta >0 \,. 
\end{equation}
Furthermore, if $\vp(t,\xi),\tilde \vp(t,\xi)\in C([0,\infty),\cM^\a)$ are two solutions to the Cauchy problem \eqref{c-p-fourier} 
with initial data $\vp_0,\tilde \vp_0\in\cM^\a$, respectively, then for any $t>0$, the following two stability estimates hold
\begin{align}\label{04}
 & ||\vp(t)-\tilde \vp(t)||_{\cM^\a} \leq e^{\lam_\a t}||\vp_0-\tilde \vp_0||_{\cM^\a}\,,\\
 &||\vp(t)-\tilde \vp(t)||_{\a} \leq e^{\lam_\a t}||\vp_0-\tilde \vp_0||_{\a}\,. \label{alpha-stability}
\end{align}
\end{theo}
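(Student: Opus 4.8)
The plan is to construct the solution to \eqref{c-p-fourier} by a fixed-point/iteration argument in the complete metric space $(\cM^\a, dis_{\a,\a})$ provided by Theorem~\ref{complete-1}, following the scheme of \cite{Cannone-Karch-1, morimoto-12} but now carrying the integral norm $\|\cdot\|_{\cM^\a}$ alongside the sup-norm $\|\cdot\|_\a$. First I would rewrite the equation in Duhamel/mild form: splitting off the loss term, since $\varphi(t,0)=1$ is conserved (this follows because $F_t$ remains a probability measure, equivalently $\partial_t\varphi(t,0)=0$ from the integrand vanishing at $\xi=0$), one gets
\begin{align}\label{mild-plan}
\varphi(t,\xi) = e^{-\cL(\xi) t}\varphi_0(\xi) + \int_0^t e^{-\cL(\xi)(t-\tau)} \int_{\SS^2} b\!\left(\tfrac{\xi\cdot\sigma}{|\xi|}\right)\varphi(\tau,\xi^+)\varphi(\tau,\xi^-)\,d\sigma\,d\tau,
\end{align}
where $\cL(\xi) = \int_{\SS^2} b(\xi\cdot\sigma/|\xi|)\,d\sigma$ is a finite constant by \eqref{index-sing} (the integrand is bounded near $\theta=0$ after the cutoff, or more precisely $\alpha_0\le 2$ makes it integrable). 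One checks that the right-hand side maps $C([0,T],\cM^\a)$ into itself and is a contraction for $T$ small, using that $\cM^\a$ is stable under the bilinear "collision" map — this is the step where the key cancellation
\[
\varphi(\xi^+)\varphi(\xi^-) - \varphi(\xi) = \big(\varphi(\xi^+)-1\big)\big(\varphi(\xi^-)-1\big) + \big(\varphi(\xi^+)-1\big) + \big(\varphi(\xi^-)-1\big) - \big(\varphi(\xi)-1\big)
\]
is exploited, together with the geometry $|\xi^\pm| = |\xi|\cos(\theta/2)$ or $|\xi|\sin(\theta/2)$, to bound the $\cM^\a$-norm of $Q$-type terms by $\lam_\a$-weighted multiples of $\|\varphi-1\|_{\cM^\a}$ (and $\|\varphi-1\|_\a$), the constant $\lam_\a$ in \eqref{05} appearing exactly from the change of variables $\xi\mapsto\xi^\pm$ and the Jacobian factors $\cos^{-d-\a}(\theta/2)$, $\sin^{-d-\a}(\theta/2)$ integrated against $b(\cos\theta)\sin\theta$.

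Next I would upgrade the local solution to a global one. Differentiating $\|\varphi(t)-1\|_{\cM^\a}$ in $t$ along the equation and using the above decomposition gives a differential inequality of the form $\frac{d}{dt}\|\varphi(t)-1\|_{\cM^\a} \le \lam_\a \|\varphi(t)-1\|_{\cM^\a}$ (the quadratic term being absorbed because $|\varphi-1|\le 2$ and, more carefully, the product term $(\varphi(\xi^+)-1)(\varphi(\xi^-)-1)$ contributes with a sign/size controlled by the same integral); Gronwall then yields $\|\varphi(t)-1\|_{\cM^\a}\le e^{\lam_\a t}\|\varphi_0-1\|_{\cM^\a}$, so the $\cM^\a$-norm cannot blow up in finite time and the local solution extends to $[0,\infty)$. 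The same computation applied to the difference $\varphi-\tilde\varphi$ of two solutions — here the algebra is even cleaner because $\varphi(\xi^+)\varphi(\xi^-)-\tilde\varphi(\xi^+)\tilde\varphi(\xi^-) = (\varphi(\xi^+)-\tilde\varphi(\xi^+))\varphi(\xi^-) + \tilde\varphi(\xi^+)(\varphi(\xi^-)-\tilde\varphi(\xi^-))$ and $|\varphi|,|\tilde\varphi|\le 1$ — gives $\frac{d}{dt}\|\varphi(t)-\tilde\varphi(t)\|_{\cM^\a}\le \lam_\a\|\varphi(t)-\tilde\varphi(t)\|_{\cM^\a}$, hence \eqref{04} by Gronwall, and uniqueness as a special case. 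Running the identical argument with the sup-norm $\|\cdot\|_\a$ in place of $\|\cdot\|_{\cM^\a}$ — which is precisely the estimate already in \cite{Cannone-Karch-1} — yields \eqref{alpha-stability}.

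Finally, the continuity estimate \eqref{fourier-continuity}: from the mild formulation \eqref{mild-plan} one writes $\varphi(t)-\varphi(s)$ for $s<t$ as the sum of $(e^{-\cL(\xi)t}-e^{-\cL(\xi)s})\varphi_0$ and two Duhamel remainders; estimating each in $dis_{\a,\a}$, using $|e^{-\cL(\xi)t}-e^{-\cL(\xi)s}|\le \cL(\xi)|t-s|$ and $\cL(\xi)$ being a bounded multiple of the quantities appearing in $\lam_\a$, together with the already-established a priori bound $\|\varphi(\tau)-1\|_{\cM^\a}\le e^{\lam_\a\tau}\|\varphi_0-1\|_{\cM^\a}$ on $[0,\max\{t,s\}]$, produces the factor $|t-s|e^{\lam_\a\max\{t,s\}}dis_{\a,\a}(\varphi_0,1)$. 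I expect the main obstacle to be verifying that the bilinear collision map is bounded on $\cM^\a$ with constant exactly $\lam_\a$ — i.e. tracking the Jacobians of $\xi\mapsto\xi^{\pm}$ in the singular integral $\int |\xi|^{-d-\a}|\varphi(\xi^+)\varphi(\xi^-)-\varphi(\xi)|\,d\xi$ and showing the $\cos^\a(\theta/2)+\sin^\a(\theta/2)-1$ combination in \eqref{05} is genuinely what emerges (this is where $\a\in[\a_0,2)$ and the integrability \eqref{index-sing} are both used, the former to make $\cos^{-d-\a},\sin^{-d-\a}$ integrable against $(\sin\theta/2)^{\alpha_0}b(\cos\theta)\sin\theta$ near $\theta=0$, since $\cos^\a(\theta/2)+\sin^\a(\theta/2)-1 = O(\theta^{\min(2,\a)})$ there); the rest is by now standard Gronwall bookkeeping, with the only subtlety being the borderline role of $\a=1$, which is allowed here (unlike in Theorem~\ref{existence-base-space}) because at the Fourier level no mean-zero constraint is separately imposed.
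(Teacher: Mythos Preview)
There is a genuine gap at the very first step. Your Duhamel form \eqref{mild-plan} requires $\cL(\xi)=\int_{\SS^2}b(\xi\cdot\sigma/|\xi|)\,d\sigma<\infty$, but assumption \eqref{index-sing} does \emph{not} give this: for $b(\cos\theta)\sim\theta^{-2-2s}$ one has $b(\cos\theta)\sin\theta\sim\theta^{-1-2s}$, which is non-integrable near $\theta=0$ for every $s>0$. The factor $(\sin\theta/2)^{\a_0}$ in \eqref{index-sing} is exactly what is missing, so $\cL(\xi)=\infty$ in the non-cutoff case and the mild formulation, the contraction argument, and the continuity estimate you base on it all collapse. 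The same problem infects the Gronwall step: in your four-term decomposition the pieces $(\varphi(\xi^+)-1)$ and $(\varphi(\xi)-1)$, after the change of variables $\xi\mapsto\xi^+$, each pick up the divergent factor $\int b(\cos\theta)\cos^\a(\theta/2)\sin\theta\,d\theta$ (resp.\ $\int b\sin\theta\,d\theta$); the finite combination $\cos^\a(\theta/2)+\sin^\a(\theta/2)-1$ in \eqref{05} does not emerge from simply summing these bounds, because the changes of variable are different term by term.

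The paper's route around this is the standard one you omitted: first prove everything for the cutoff cross sections $b_n=\min\{b,n\}$ (where $\gamma_2=\int b_n\,d\sigma<\infty$, so your Duhamel/contraction argument \emph{is} valid and yields the stability \eqref{cutoff-sta} with constant $\lambda_{\a,n}\le\lambda_\a$), and then pass to the limit $n\to\infty$ using the uniform bound $\|\varphi_n(t)-1\|_{\cM^\a}\le e^{\lambda_\a t}\|\varphi_0-1\|_{\cM^\a}$ together with the Ascoli--Arzel\`a compactness already established in \cite{morimoto-12}. For the time-continuity \eqref{fourier-continuity} in the non-cutoff case one cannot use the mild form either; instead one must bound $\int_{\RR^3}|\xi|^{-3-\a}\big|\int_{\SS^2}b(\varphi(\xi^+)\varphi(\xi^-)-\varphi(\xi))\,d\sigma\big|\,d\xi$ directly, and this requires a genuine cancellation argument (the paper symmetrizes $\xi^+\leftrightarrow\tilde\xi^+$ about the projection $\zeta$ of $\xi^+$ onto $\xi$, then uses the pointwise inequality $|\varphi(\xi)-\varphi(\xi+\eta)|\le 4|1-\varphi(\xi)|^{1/2}|1-\varphi(\eta)|^{1/2}+|1-\varphi(\eta)|$), not just the algebraic splitting you wrote down.
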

\begin{rema}\label{rem-1234}
Since $\vp_0,\tilde \vp_0\in\cM^\a \subset \cK^\a$, the stability estimate \eqref{alpha-stability} is nothing but (13) of \cite{morimoto-12}.
\end{rema}

As in \cite{MY}, set  $\tilde P_\a(\RR^3) =\cF^{-1}(\cK^\a)$ endowed with the distance \eqref{distance}.
We are now ready to state the global in time smoothing effect for infinite energy solutions to the Cauchy problem \eqref{bol}-\eqref{initial},
which is an improvement of Theorem 1.3 of \cite{MY}, where the time global smoothing effect for finite energy solutions and 
a short time smoothing effect for infinite energy solutions were proved.

\begin{theo}\label{smoothing-thm} 
Let $b(\cos \theta)$ satisfy \eqref{1.2} 
with $0<s<1$
and let $\alpha \in (2s, 2]$.
If $F_0 \in \tilde P_\alpha(\RR^3)$ is not a single Dirac mass and  $f(t,v)$ is a unique solution 
in $C([0,\infty), \tilde P_\alpha(\RR^3))$ to the Cauchy problem \eqref{bol}-\eqref{initial},
then  $f(t, \cdot )$ belongs to $H^\infty(\RR^3)$ for any $t >0$.  If $F_0 \in  P_\alpha(\RR^3)$ then
$f(t, \cdot )$ belongs to $L^1_\a (\RR^3) \cap H^\infty(\RR^3) $ for any $t >0$, and moreover when $\a \ne 1$, 
we have $f(t,v) \in C((0,\infty), L^1_\a(\RR^3)\cap H^\infty(\RR^3))$. 
\end{theo}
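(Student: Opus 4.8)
\emph{Proof proposal.}
The plan is to work entirely on the Fourier side through the Bobylev form \eqref{c-p-fourier} for $\varphi(t,\xi)=\widehat{f(t)}(\xi)$, and to combine three ingredients: the global-in-time a priori bound $\|\varphi(t)-1\|_\a\le e^{\lambda_\a t}\|\varphi_0-1\|_\a$ (take $\tilde\varphi\equiv1$ in \eqref{alpha-stability}, recalling $\varphi(t,0)\equiv1$); the coercivity produced by the angular singularity of order $2s<\a$; and the hypothesis that $F_0$ is not a single Dirac mass. The role of the last point is structural: $F_0=\delta_{v_0}$ is equivalent to $|\varphi_0|\equiv1$, which is preserved by \eqref{c-p-fourier} and permits no smoothing, whereas if $F_0$ is not a Dirac mass then $|\varphi_0|<1$ on a set of positive measure, and this defect must spread under the flow. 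I would split the argument into \textbf{(I)} an instantaneous gain of $L^2(\RR^3_\xi)$ regularity for every $t>0$ with constants that grow at most exponentially in $t$, and \textbf{(II)} a bootstrap from $L^2$ to $H^\infty$, again with locally-in-$t$ uniform bounds; part (b) is then a soft consequence of Theorem \ref{complete-1}.

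\textbf{Step I (instantaneous $L^2$-regularization).}
From \eqref{c-p-fourier}, taking real parts and writing $2\,\mathrm{Re}(\bar\varphi(\xi)\varphi(\xi^+)\varphi(\xi^-))=|\varphi(\xi)|^2+|\varphi(\xi^+)\varphi(\xi^-)|^2-|\varphi(\xi)-\varphi(\xi^+)\varphi(\xi^-)|^2$, one gets an identity whose $\xi$-integral produces the genuine dissipative term $-\int_{\RR^3}\int_{\SS^2}b\big(\tfrac{\xi\cdot\sigma}{|\xi|}\big)|\varphi(\xi)-\varphi(\xi^+)\varphi(\xi^-)|^2\,d\sigma\,d\xi$, together with a transport-type remainder; crudely, using $|\varphi(\xi^-)|\le1$, it also yields the pointwise fractional-heat inequality $\partial_t|\varphi(t,\xi)|^2\le\int_{\SS^2}b\big(\tfrac{\xi\cdot\sigma}{|\xi|}\big)\big(|\varphi(t,\xi^+)|^2-|\varphi(t,\xi)|^2\big)\,d\sigma$, the non-local operator on the right being of order $2s$ along the spheres $\xi^+=\tfrac\xi2+\tfrac{|\xi|}2\sigma$. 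I would test these against a truncation $\eta(\xi/R)$ and a time factor $\min(t,1)^{a_0}$ chosen so the resulting quantity vanishes at $t=0$ even for measure data, and combine them with a Nash/ADVW-type coercivity estimate $-\int\int b|\varphi(\xi)-\varphi(\xi^+)\varphi(\xi^-)|^2\lesssim-\|\varphi\|_{\dot H^{s}}^2+C(\|\varphi-1\|_\a,\dots)$, to obtain, uniformly in $R$, $\min(t,1)^{a_0}\|\varphi(t)\|_{L^2}^2\le C(t)$ with $C(t)$ of at most exponential growth. The non-Dirac hypothesis enters to exclude the only obstruction to the gain and, quantitatively, to supply the initial coercive input through the defect $\int_{\RR^3}(1-|\varphi_0(\xi)|^2)\,\langle\xi\rangle^{-3-\a}\,d\xi>0$. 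Crucially, all constants here are estimated through the $\cK^\a$ norm, which is globally (exponentially) bounded, rather than through the energy; this is what promotes the short-time regularization of \cite{MY} to every $t>0$.

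\textbf{Step II (bootstrap to $H^\infty$; global propagation; part (b)).}
Once $\langle\xi\rangle^0\varphi(t_0,\cdot)\in L^2$ for some $t_0>0$, induct on $N$: assuming $\langle\xi\rangle^N\varphi(t,\cdot)\in L^2$ locally uniformly in $t$, perform the weighted estimate for $\langle\xi\rangle^{N+\mu}\varphi$ with $0<\mu\le s$. The delicate region is $\theta$ near $0$, where one splits $\varphi(\xi^+)\varphi(\xi^-)-\varphi(\xi)=(\varphi(\xi^+)-\varphi(\xi))\varphi(\xi^-)+\varphi(\xi)(\varphi(\xi^-)-1)$: the second term is handled by the $\cK^\a$ bound, $|\varphi(\xi^-)-1|\lesssim(|\xi|\sin\tfrac\theta2)^\a$, which is integrable against $b(\cos\theta)\sin\theta$ precisely because $\a>2s$, while the first term, after the $\sigma$-average, is absorbed into the gained $\dot H^{N+\mu+s}$-coercivity; for $\theta$ bounded away from $0$ both $|\xi^\pm|$ are comparable to $|\xi|$, so $\varphi(\xi^\pm)$ are controlled by the inductive hypothesis. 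Inserting a time weight $\min(t,1)^{a_N}$ closes the estimate with at-most-exponential-in-$t$ constants, so $f(t)\in H^\infty(\RR^3)$ for every $t>0$. For part (b), if $F_0\in P_\a(\RR^3)$ with $\a\ne1$ then $\varphi_0\in\cM^\a$ by \eqref{equivalence-space}, hence $\varphi(t)\in C([0,\infty),\cM^\a)$ by Theorem \ref{fourier-space}; thus $F_t\in P_\a(\RR^3)$ for all $t$ and $t\mapsto F_t$ is continuous in the sense of \eqref{measure-convergence}, so $f(t)\in L^1_\a\cap H^\infty$ for $t>0$, and the locally-uniform $H^\infty$ bounds together with the uniform integrability of $\langle v\rangle^\a$ coming from the $\cM^\a$-continuity upgrade \eqref{measure-convergence} to convergence in $L^1_\a\cap H^\infty$, i.e. $f\in C((0,\infty),L^1_\a\cap H^\infty)$; for $\a=1$ one uses instead the locally uniform first-moment bound of Remark \ref{remark-1}.

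\textbf{Main obstacle.}
The crux is Step I and the $\theta\to0$ analysis in Step II: extracting a \emph{quantitative} $L^2$ (then $H^\infty$) smoothing from the merely qualitative non-concentration encoded in ``not a single Dirac mass'', and propagating Sobolev regularity for the genuinely bilinear, non-cutoff collision operator using only mass conservation and the $\cK^\a$ bound, with no recourse to finite energy — which is exactly what restricted \cite{MY} to a short time interval. Making every constant depend on $t$ only exponentially, rather than blowing up in finite time, is the device that turns the local smoothing into a global one, and the inequality $2s<\a\le2$ is used repeatedly to force the singular angular integrals to converge.
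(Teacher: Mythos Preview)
Your Step II bootstrap and your treatment of part (b) are essentially correct and align with the paper. The gap is in Step I, and it is exactly the ``main obstacle'' you flag but do not resolve: you assert that the coercivity constants in your Nash/ADVW-type inequality can be estimated solely through $\|\varphi(t)-1\|_\a$, but you give no mechanism for this. The $\cK^\a$ norm does not prevent concentration---indeed $\delta_0$ has $\|\hat\delta_0-1\|_\a=0$---so a bound on $\|\varphi(t)-1\|_\a$ alone cannot control the coercivity constant from below. The ``initial defect'' $\int(1-|\varphi_0|^2)\langle\xi\rangle^{-3-\a}d\xi>0$ you invoke is a datum-dependent quantity with no evident monotonicity or propagation law under \eqref{c-p-fourier}; your subsolution inequality for $|\varphi|^2$ gives only an upper bound and cannot by itself rule out that the defect collapses in finite time. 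This is precisely why \cite{MY} obtained only a short-time result along these lines.

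The paper takes a different route that sidesteps this difficulty entirely. It first invokes the local smoothing of \cite{MY} to obtain $f(T,\cdot)\in H^\infty(\RR^3)$ for some $T>0$; in particular $f(T,\cdot)\in L\log L$. It then shows (Proposition~\ref{entropy-finite}) that $\|f(t)\|_{L^1_{\a'}}+\|f(t)\|_{L\log L}$ stays bounded on every compact interval $[T,T_1]$: the moment piece comes from $\cK^\a\subset\cM^{\a'}$ and \eqref{moment-es}, while the $L\log L$ piece is propagated by approximating $f(T,\cdot)$ by compactly supported data in $P_2$, applying Villani's $H$-theorem \eqref{H-theorem} to the approximants (which do have finite energy), and passing to the limit via weak $L^1$ compactness and convexity of $\lambda\log(1+\lambda)$. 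With both moment and entropy bounds in hand, the $H^\infty$ regularity on $[T,T_1]$ follows from the argument of Theorem~1.5 in \cite{morimoto-12}. Thus the paper's device for turning local smoothing into global smoothing is not a sharpening of the Fourier coercivity constants, but rather the observation that the local smoothing itself manufactures an $L\log L$ bound at time $T$, and \emph{that} is the quantitative non-concentration which then propagates by entropy dissipation.
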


The rest of the paper will be organized as follows. In the following
section, we will prove the needed estimates for the new classification
$\cM^\alpha$ of $\cK$. The existence of measure valued solution in 
the function space $\cM^\alpha$ will be discussed in Section 3 and the
global in time smoothing effect of the solution will be proved
in the last section.

\section{Characterization of $P_\alpha$}\label{s2}
\setcounter{equation}{0}

In this section, we will prove the estimates on $\cM^\alpha$ stated in
Theorem \ref{complete-1}. For this, we first prove the following
two propositions.

\begin{prop}\label{chara-1}
If $0 < \alpha <2$, $\a \ne 1$ and if  $F \in P_\alpha(\RR^d)$,
then there exists a $C >0$ depending only on $d$ and $\a$ such that for $\varphi = \cF(F)$ we have 
\begin{align}\label{fourier-int}
\|\varphi -1\|_{\cM^\a} = \int_{\RR^d} \frac{|\varphi(\xi)-1|}{|\xi|^{d+\alpha}} d \xi \le C \int_{\RR^d} \la v \ra^\a dF(v)  \,.
\end{align}
When $\alpha =1$, we have $\varphi  \in \cM^\a$  if  $F \in P_1(\RR^d)$ satisfies 
\begin{align}\label{moments-log}
\int \la v \ra \log \la v \ra  dF(v) < \infty\,.
\end{align}
\end{prop}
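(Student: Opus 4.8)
The plan is to work from the elementary representation
$\varphi(\xi)-1=\int_{\RR^d}\bigl(e^{-iv\cdot\xi}-1\bigr)\,dF(v)$, valid since $F$ is a probability measure, and to exploit the moment hypothesis together with the mean--zero condition \eqref{mean} (available precisely when $\a\ge1$), which allows the rewriting $\varphi(\xi)-1=\int_{\RR^d}\bigl(e^{-iv\cdot\xi}-1+iv\cdot\xi\bigr)\,dF(v)$. In every case one takes absolute values inside the $v$--integral, multiplies by $|\xi|^{-d-\a}$, integrates in $\xi$, and interchanges the two integrations by Tonelli's theorem; this is legitimate with no a priori bound because the integrand is nonnegative, and the resulting iterated integral will then be estimated by a finite quantity.

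For $0<\a<1$ I would use $|e^{-i\theta}-1|\le\min(2,|\theta|)$, so that $|\varphi(\xi)-1|\le\int\min(2,|v||\xi|)\,dF(v)$. After Tonelli, the inner integral is $\int_{\RR^d}|e^{-iv\cdot\xi}-1|\,|\xi|^{-d-\a}\,d\xi$, and a rotation sending $v$ to $|v|e_1$ followed by the dilation $\xi\mapsto\xi/|v|$ identifies it with $C_{d,\a}\,|v|^\a$, where $C_{d,\a}=\int_{\RR^d}|e^{-i\xi_1}-1|\,|\xi|^{-d-\a}\,d\xi<\infty$; finiteness holds because the integrand is $O(|\xi|^{1-d-\a})$ near the origin ($\a<1$) and $O(|\xi|^{-d-\a})$ at infinity ($\a>0$). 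Together with $|v|^\a\le\la v\ra^\a$ this is exactly \eqref{fourier-int}. For $1<\a<2$ the same scheme works with the sharper bound $|e^{-i\theta}-1+i\theta|\le\min(\theta^2/2,2|\theta|)$ applied to the mean--zero representation: the inner integral becomes $\int_{\RR^d}|e^{-iv\cdot\xi}-1+iv\cdot\xi|\,|\xi|^{-d-\a}\,d\xi=C'_{d,\a}\,|v|^\a$ with $C'_{d,\a}=\int_{\RR^d}|e^{-i\xi_1}-1+i\xi_1|\,|\xi|^{-d-\a}\,d\xi$, now finite because the integrand is $O(|\xi|^{2-d-\a})$ near $0$ ($\a<2$) and $O(|\xi|^{1-d-\a})$ at infinity ($\a>1$), again giving \eqref{fourier-int}.

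For the borderline exponent $\a=1$ the $\xi$--integral of $|e^{-iv\cdot\xi}-1+iv\cdot\xi|\,|\xi|^{-d-1}$ over all of $\RR^d$ diverges logarithmically at infinity, so I would split the $\xi$--domain into $\{|\xi|>1\}$ and $\{|\xi|\le1\}$. On $\{|\xi|>1\}$ the crude bound $|\varphi(\xi)-1|\le2$ already yields $\int_{|\xi|>1}|\varphi(\xi)-1|\,|\xi|^{-d-1}\,d\xi\le2\int_{|\xi|>1}|\xi|^{-d-1}\,d\xi<\infty$. On $\{|\xi|\le1\}$ I would use the mean--zero representation and $|e^{-i\theta}-1+i\theta|\le\min(\theta^2/2,2|\theta|)$ to get $|\varphi(\xi)-1|\le\int\min\bigl(\tfrac12|v|^2|\xi|^2,\,2|v||\xi|\bigr)\,dF(v)$; after Tonelli and passage to polar coordinates the inner integral is a dimensional constant times $\int_0^1\min\bigl(\tfrac12|v|^2r^2,\,2|v|r\bigr)r^{-2}\,dr$, and splitting the $r$--range at $r=4/|v|$ (where the two competing expressions coincide) bounds it by $C\,|v|\bigl(1+\log_+|v|\bigr)\le C\,\la v\ra\bigl(1+\log\la v\ra\bigr)$. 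Integrating against $dF$ and using both $F\in P_1(\RR^d)$ and the hypothesis $\int\la v\ra\log\la v\ra\,dF(v)<\infty$ gives finiteness, hence $\varphi\in\cM^1$.

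The one genuinely delicate point is the case $\a=1$: there is no single closed--form scaling identity, and one must instead exploit the dichotomy between the bounds $\theta^2$ and $|\theta|$ at the critical scale $|\xi|\sim1/|v|$, which is exactly what produces the logarithmic factor and forces the $L\log L$--type moment assumption. The remaining ingredients --- the interchange of integrals, the elementary inequalities for $e^{-i\theta}$, and the convergence of the scalar constants $C_{d,\a}$ and $C'_{d,\a}$ by power counting near $0$ and at infinity --- are routine.
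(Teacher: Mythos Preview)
Your proof is correct and uses the same fundamental ingredients as the paper --- the representation of $\varphi(\xi)-1$ via Taylor expansion to order $[\a]$ (with the mean--zero condition \eqref{mean} justifying the inclusion of the linear term when $\a\ge1$), the pointwise bound $\min\{|\theta|^{[\a]+1},\,|\theta|^{[\a]}\}$ on the remainder, and Tonelli to exchange the $v$-- and $\xi$--integrations.

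The organization differs slightly. For $\a\ne1$ you exploit the exact homogeneity of the kernel: the dilation $\xi\mapsto\xi/|v|$ gives the closed formula $\int_{\RR^d}|e^{-iv\cdot\xi}-1\,(+iv\cdot\xi)|\,|\xi|^{-d-\a}\,d\xi=C_{d,\a}\,|v|^\a$ in one stroke, with finiteness of $C_{d,\a}$ checked by power counting. The paper instead splits $\{|\xi|\le1\}\cup\{|\xi|>1\}$ and, on the inner piece, cuts the radial integral at $\rho=\la v\ra^{-1}$, treating all $\a\in(0,2)$ uniformly; the logarithm at $\a=1$ then emerges from the middle integral $\int_{\la v\ra^{-1}}^1(|v|\rho)\rho^{-2}\,d\rho$. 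Your scaling argument is cleaner when $\a\ne1$ (no splitting at all), while the paper's unified splitting makes the origin of the $\log$ at $\a=1$ more visibly the borderline case of the same computation. For $\a=1$ your argument and the paper's coincide.
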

\begin{proof}
Note
\begin{align*}
\varphi(\xi) -1&= \int_{\RR^d} (e^{-iv\cdot \xi}-1) d F(v)\\
& = \int_{\RR^d}\Big(e^{-iv\cdot \xi}- \sum_{\ell \le [\alpha]} \frac{1}{\ell !} (-iv\cdot \xi)^\ell \Big)dF(v),
\end{align*}
because of \eqref{mean} when $\alpha \ge 1$. By the polar coordinate $\xi = \rho \omega$ ($\rho >0, \omega \in  \SS^{d-1}$), we have
\[
\Big|e^{-iv\cdot \xi}- \sum_{\ell \le [\alpha]} \frac{1}{\ell !} (-iv\cdot \xi)^\ell 
\Big| \lesssim \min \big \{(|v|\rho)^{[\alpha]+1}\, ,\,\big (1+(|v|\rho)^{[\alpha]}\big)\big \}\,,
\]
so that 
\begin{align*}
&\int_{\{|\xi|\le 1\}} \frac{|1-\varphi(\xi)|}{|\xi|^{d+\alpha}} d \xi 
\le \int_{\RR^d}\Big(  \int_{{\{|\xi|\le 1\}}}\Big|e^{-iv\cdot \xi}- \sum_{\ell \le [\alpha]} \frac{1}{\ell !} (-iv\cdot \xi)^\ell \Big| \frac{d \xi}{|\xi|^{d+\alpha}} \Big)dF(v)\\
& \lesssim 
\int_{\RR_v^d} \Big( \int_0^{\la v \ra^{-1}}\frac{(|v|\rho)^{[\alpha]+1} }{\rho^{1+ \alpha}}d\rho
+\int_{\la v \ra^{-1}}^1 \frac{1+ (|v|\rho)^{[\alpha]} }{\rho^{1+\alpha}}d\rho\Big)dF(v)\\
& \lesssim \int_{\RR_v^d} \la v \ra^\alpha dF(v)  \enskip \mbox{if $\alpha \ne 1$}, 
\enskip \lesssim \int_{\RR_v^d} \la v \ra \log \la v \ra dF(v)  \enskip \mbox{if $\alpha =1$}. 
\end{align*}
Consequently, we have the  estimate \eqref{fourier-int}  because $\int_{\{|\xi| > 1\}} \frac{1}{|\xi|^{d+\alpha}} d \xi < \infty$.
\end{proof}

Conversely, we have 
\begin{prop}\label{chara-2}
Let $\a \in (0,2)$ and let $\cM^\a$ be a subspace of $\cK = \cF(P_0(\RR^d))$ defined by \eqref{M-al}.
Then we have the inclusion \eqref{inclusion-spa}. 
Furthermore, for $M \in[1, \infty]$, if we put 
\begin{align}\label{uniform-constant}
c_{\alpha,d,M} = \int_{\{|\zeta| \le M\}} \frac{\sin^2 (\ve_1 \cdot \zeta/2)}{|\zeta|^{d+\alpha}} d \zeta >0,
\end{align}
and if $F = \cF^{-1}(\varphi)$ for $\varphi \in \cM^\a$, then for any $R >0$ we have
\begin{align}\label{uniform-moment-est}
\int_{\{|v| \ge R \}} |v|^\alpha dF(v) \le \frac{1}{2c_{\alpha,d,1}}
\int_{\{|\xi| \le 1/R\}} \frac{|1-\varphi(\xi)|}{|\xi|^{d+\alpha}} d \xi.
\end{align}
Moreover,
\begin{align}\label{moment-es}
\int_{\RR^d} |v|^\alpha dF(v) \le \frac{1}{2c_{\alpha,d, \infty}}
\int_{\RR^d} \frac{|1-\varphi(\xi)|}{|\xi|^{d+\alpha}} d \xi  \,.
\end{align}

\end{prop}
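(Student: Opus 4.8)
The plan is to build everything on the elementary nonnegative quantity $1-\re\varphi(\xi)$. For a characteristic function $\varphi=\cF(F)$ with $F\in P_0(\RR^d)$ one has
\[
1-\re\varphi(\xi)=\int_{\RR^d}\bigl(1-\cos(v\cdot\xi)\bigr)\,dF(v)=2\int_{\RR^d}\sin^2\!\Bigl(\tfrac{v\cdot\xi}{2}\Bigr)\,dF(v)\ge 0,
\]
so that $|1-\varphi(\xi)|\ge 1-\re\varphi(\xi)\ge 0$. Since every quantity in sight is nonnegative, Tonelli's theorem lets me interchange the $\xi$- and $v$-integrations without any scruple, which reduces the whole estimate to the scalar integral in \eqref{uniform-constant}. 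I would first record that $c_{\a,d,\infty}<\infty$ (hence $c_{\a,d,M}<\infty$ for all $M$): near $\zeta=0$ the integrand there is $\lesssim|\zeta|^{2-d-\a}$, integrable because $\a<2$, and near infinity it is $\lesssim|\zeta|^{-d-\a}$, integrable because $\a>0$; positivity is obvious.

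The computational heart is the inner integral. For fixed $v\ne 0$, the substitution $\xi=\zeta/|v|$ followed by a rotation taking $v/|v|$ to $\ve_1$ (the ball $\{|\xi|\le M\}$ being rotation invariant) gives
\[
\int_{\{|\xi|\le M\}}\frac{\sin^2(v\cdot\xi/2)}{|\xi|^{d+\a}}\,d\xi=|v|^\a\int_{\{|\zeta|\le M|v|\}}\frac{\sin^2(\ve_1\cdot\zeta/2)}{|\zeta|^{d+\a}}\,d\zeta .
\]
Applied with the domain $\{|\xi|\le 1/R\}$: if $|v|\ge R$ then $|v|/R\ge1$, so the right-hand side is at least $|v|^\a c_{\a,d,1}$; combining with $|1-\varphi|\ge 1-\re\varphi$, interchanging integrals, and keeping only $\{|v|\ge R\}$ in the $v$-integral, then dividing by $2c_{\a,d,1}$, yields \eqref{uniform-moment-est}. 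Taking instead $M=\infty$, the right-hand side equals exactly $|v|^\a c_{\a,d,\infty}$ for every $v\ne0$ (and $v=0$ contributes nothing), giving $\int_{\RR^d}|1-\varphi(\xi)|\,|\xi|^{-d-\a}\,d\xi\ge 2c_{\a,d,\infty}\int_{\RR^d}|v|^\a\,dF(v)$, i.e.\ \eqref{moment-es}.

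To deduce the inclusion \eqref{inclusion-spa}, let $\varphi\in\cM^\a$ and $F=\cF^{-1}(\varphi)$. Estimate \eqref{moment-es} gives $\int|v|^\a\,dF<\infty$, which already shows $F\in P_\a(\RR^d)$ when $\a\in(0,1)$. For $\a\in[1,2)$ there remains the mean condition \eqref{mean}, and this is where I expect the only genuine work. Since $F$ is a probability measure, Hölder gives $\int|v|\,dF\le(\int|v|^\a\,dF)^{1/\a}<\infty$, so I may write $\varphi(\xi)-1=-i\,m\cdot\xi+r(\xi)$ with $m=\int v\,dF$ and $r(\xi)=\int(e^{-iv\cdot\xi}-1+iv\cdot\xi)\,dF(v)$. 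The integrand in $r$ is dominated by $2|v|\in L^1(dF)$ and tends to $0$ pointwise, so dominated convergence yields $\sup_{|\omega|=1}|r(\rho\omega)|=o(\rho)$ as $\rho\to0^+$. If $m\ne0$, then on the positive-measure cone $\Sigma=\{\omega\in\SS^{d-1}:m\cdot\omega\ge|m|/2\}$ one gets $|1-\varphi(\rho\omega)|\ge\rho|m|/4$ for all small $\rho$, whence
\[
\|\varphi-1\|_{\cM^\a}\ \ge\ \int_0^{\rho_0}\!\!\int_{\Sigma}\frac{\rho|m|/4}{\rho^{d+\a}}\,\rho^{d-1}\,d\omega\,d\rho\ =\ \frac{|m|\,\sigma(\Sigma)}{4}\int_0^{\rho_0}\rho^{-\a}\,d\rho\ =\ \infty
\]
because $\a\ge1$, contradicting $\varphi\in\cM^\a$. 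Hence $m=0$ and $F\in P_\a(\RR^d)$. In summary, the $\sin^2$ identity makes the moment bounds essentially automatic; the delicate point is the directional uniformity of the remainder $r$ that powers this cone lower bound in the range $\a\in[1,2)$, together with the bookkeeping in the rescaling–rotation step so that the constants come out exactly as the $c_{\a,d,M}$ of \eqref{uniform-constant}.
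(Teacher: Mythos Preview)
Your proof is correct. For the moment bounds \eqref{uniform-moment-est} and \eqref{moment-es}, your argument is essentially identical to the paper's: bound $|1-\varphi|$ below by $1-\re\varphi=2\int\sin^2(v\cdot\xi/2)\,dF$, then rescale $\xi\mapsto\zeta/|v|$ and rotate to extract the factor $|v|^\a c_{\a,d,M}$.

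For the zero-mean condition when $\a\in[1,2)$ the two proofs diverge. The paper translates $F$ by its putative mean $a$, invokes Proposition~\ref{chara-1} to place $\varphi_a(\xi)=e^{ia\cdot\xi}\varphi(\xi)$ in $\cM^\a$, and then uses the triangle inequality to conclude $\int|e^{-ia\cdot\xi}-1|\,|\xi|^{-d-\a}\,d\xi<\infty$, which it shows is impossible for $a\ne0$. Your direct Taylor-remainder cone argument is more self-contained: it does not appeal back to Proposition~\ref{chara-1}, and in particular handles the borderline $\a=1$ without needing the extra $\la v\ra\log\la v\ra$ hypothesis that Proposition~\ref{chara-1} requires in that case. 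The paper's route has the conceptual virtue of isolating the obstruction as the single explicit characteristic function $e^{-ia\cdot\xi}$; yours is slightly shorter and avoids that backward dependency.
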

\begin{proof}
Since $|\varphi(\xi)| \le \varphi(0) =1$, we have
\begin{align*}
\int_{\{|\xi| \le M/R\}} \frac{|1-\varphi(\xi)|}{|\xi|^{d+\alpha}} d \xi  &\ge 
\int_{\{|\xi| \le M/R\}}\frac{\mbox{Re}~ (1-\varphi(\xi))}{|\xi|^{d+\alpha}} d \xi \\
&= \int_{\RR_v^d} \Big(\int_{\{|\xi| \le M/R\}}\frac{2 \sin^2 (v\cdot \xi/2)}{|\xi|^{d+\alpha}} d \xi \Big)
dF(v).
\end{align*}
By the change of variable $|v| \xi = \zeta$ and by using
the invariance of the rotation, we have
\begin{align*}
\int_{\{|\xi| \le M/R\}} \frac{\sin^2 (v\cdot \xi/2)}{|\xi|^{d+\alpha}} d \xi
&=|v|^\alpha \int_{\{|\zeta| \le M |v|/R\}} \frac{\sin^2 (\ve_1 \cdot \zeta/2)}{|\zeta|^{d+\alpha}} d \zeta \\
& \ge |v|^\alpha {\bf 1}_{\{|v| \ge R\}} c_{\alpha,d,M} \,,
\end{align*} 
which yields \eqref{uniform-moment-est}, with the choice of $M=1$. 
By letting  $M \rightarrow \infty$  and   $R \rightarrow 0$, we obtain  \eqref{moment-es}.  In order to complete the proof of \eqref{inclusion-spa}, it remains to show \eqref{mean} when  $\a \ge 1$. 
Suppose that 
\[
\exists \varphi \in \cM^\a \enskip s.t. \enskip a := \int v d F(v) \ne 0\,, \enskip F = \cF^{-1}(\varphi).
\]
Since $F(\cdot + a)$ belongs to $P_\a(\RR^d)$ and satisfies \eqref{mean}, it follows from  Proposition \ref{chara-1} that its Fourier transform  
$$\varphi_a(\xi)  = \int e^{-iv\cdot \xi} dF(v+a)= e^{ia \cdot \xi} \varphi(\xi)$$ 
belongs to $\cM^\a$,  that is,
\begin{align*}
\infty > \int_{\RR^d} \frac{|1-e^{ia \cdot \xi} \varphi(\xi)|}{|\xi|^{d+\alpha}} d \xi = \int_{\RR^d} \frac{|(e^{-ia \cdot \xi}-1) + (1-\varphi(\xi))|}{|\xi|^{d+\alpha}} d \xi\,.
\end{align*}
Since $\varphi \in \cM^\a$, we obtain $\int_{\RR^d} \frac{|e^{-ia \cdot \xi}-1|}{|\xi|^{d+\alpha}} d \xi< \infty$, which contradicts $a \ne 0$. In
fact, by the rotation, we can assume $a = (0,\cdots,0, |a|)$ and hence 
\[
\infty > \int_{\RR^d} \frac{|e^{-ia \cdot \xi}-1|}{|\xi|^{d+\alpha}} d \xi
\ge |\SS^{d-2}|\int_0^{\pi/2}\Big( \int_0^{1/|a|} \frac{\sin (\rho |a| \cos \theta)}{\rho^{1+\a}} d\rho\Big) d \theta\,.
\]
\end{proof}

We are now ready to prove Theorem \ref{complete-1}.

\begin{proof}[Proof of Theorem \ref{complete-1}]
Suppose that $\{\varphi_n \}_{n=1}^\infty \subset \cM^\a$ satisfies
\[
dis_{\a,\b} (\varphi_n, \varphi_m) \rightarrow 0 \enskip (n,m \rightarrow \infty) \enskip , \enskip 0< \b \le \a <2.
\]
Since it follows from Proposition 3.10 of \cite{Cannone-Karch-1} that $\cK^\b$ is a complete metric space,
we have the limit (pointwise convergence)
$$\varphi(\xi) = \lim_{n \rightarrow \infty } \varphi_n(\xi)  \in \cK^\b \subset \cK.
$$
For any fixed $R >1$ we have 
\[
\int_{\{R^{-1}\le |\xi| \le R\}}\frac{|\varphi_n (\xi) -1|}{|\xi|^{d+\a}} d\xi \le \sup_{n} \|\varphi_n -1 \|_{\cM^\a} < \infty.
\]
Taking the limit with respect to $n$ and letting $R \rightarrow \infty$, we have $\varphi \in \cM^\a$.
Now it is easy to see that  $dis_{\a,\b} (\varphi_n, \varphi) \rightarrow 0$.

To prove \eqref{trivial-inclusion}, we will show that 
there exists a $C_{\a,\b,d} >0$
depending only on $\a, \b, d$ such that
\begin{align}\label{trivial-estimate}
\|\varphi -1 \|_{\cM^\a} \le C_{\a,\b,d} \|\varphi -1\|_\b^{\a/\b}, \enskip \mbox{for $\varphi \in \cK^\b$\,.}
\end{align}
Since $|\varphi(\xi)|\le \varphi(0) =1$, for $R >0$ we have 
\begin{align*}
\|\varphi -1\|_{\cM^a} \leq \int_{\{|\xi| > 1/R\}} \frac{2}{|\xi|^{d+\alpha}} d \xi +\int_{\{|\xi| \le 1/R\}} \frac{\|\varphi-1\|_\b}{|\xi|^{d+\alpha-\b}} d \xi \,,
\end{align*}
which gives \eqref{trivial-estimate} by taking $R \sim \|\varphi -1\|_\b^{1/\b}$.  
By Proposition \ref{chara-2}, we get \eqref{inclusion-spa}, that is, $\cM^\a \subset \cF(P_\a(\RR^d))$.
Since it follows from Lemma 3.15 of \cite{Cannone-Karch-1} that
$\cF(P_\a(\RR^d))\subset \cK^\a$,  
we have $\cM^\a \subset  \cK^\a$. More precisely,  there exist $C_1, C_2 >0$ such that for $\varphi \in \cM^\a,\, F = \cF^{-1}(\varphi)$, we have 
\begin{align}\label{embed}
\|\varphi -1\|_\a \le C_1 \int_{\RR^d} |v|^\a dF(v) \le C_2 \|\varphi -1\|_{\cM^\a}\,,
\end{align}
where the second inequality follows from \eqref{moment-es}.

{We will now show \eqref{fourier-equiv}. Let $\b \in (0,\a)$ and suppose that 
$
dis_{\a,\b} (\varphi, \varphi_0) < \delta
$ for $\varphi, \varphi_0  \in \cM^\a$.
Noting that
\begin{align*}
&\frac{\varphi(\xi) -\vp_0(\xi)}{|\xi|^\a} =\int_{\RR^d} \frac{e^{-i\xi \cdot v} -1}{(|\xi||v|)^\a}|v|^\a \Big(dF(v) -dF_0(v)\Big), \enskip &\mbox{if $\a \in (0,1)$,}\\
&\frac{\varphi(\xi) -\vp_0(\xi)}{|\xi|^\a} =\int_{\RR^d} \frac{e^{-i\xi \cdot v} -1+ i \xi\cdot v}{(|\xi||v|)^\a}|v|^\a \Big(dF(v) -dF_0(v)\Big), \enskip &\mbox{if $\a \in [1,2)$,}
\end{align*}
we have, for any $1 < \tilde R < R$,  
\begin{align*}
\sup_{|\xi| <R^{-1}}\frac{|\varphi(\xi) -\vp_0(\xi)|}{|\xi|^\a}& \le \sup_{|\xi| <R^{-1}}
\int_{\{|v| < \tilde R\}}\Big(|v||\xi|\Big)^{1+[\a]-\a} |v|^\a\Big(dF(v) + dF_0(v)\Big)\\
&\quad + 3 \int_{\{|v| \ge \tilde R\}}|v|^\a\Big(dF(v) + dF_0(v)\Big)\\
&= I_1(R, \tilde R) + I_2(\tilde R)\,.
\end{align*}
It follows from \eqref{uniform-moment-est} that 
\begin{align*}
I_2(\tilde R) &\le \frac{3}{2 c_{\a,d,1}}
\int_{\{|\xi| \le 1/{\tilde R}\}} \frac{|1-\varphi(\xi)| + |1-\varphi_0(\xi)|}{|\xi|^{d+\alpha}} d \xi \\
&\le \frac{3}{c_{\a,d,1}}\Big( \int_{\{|\xi| \le 1/{\tilde R}\}} \frac{ |1-\varphi_0(\xi)|}{|\xi|^{d+\alpha}} d \xi  + \|\vp -\vp_0\|_{\cM^a} \Big).
\end{align*}
Hence,  for any $\varepsilon >0$ there exists a $\tilde R>1$ such that 
$
I_2(\tilde R) < \varepsilon + {3\delta}/{c_{\a,d,1}}
$. If $\tilde R >1$ is fixed as above, then  we have
\begin{align*}
I_1(R, \tilde R)  \le  2 \frac{\tilde R^{1+[\a]} }{R^{1+[\a]-\a}} \rightarrow 0\,, \enskip R \rightarrow \infty\,.
\end{align*}
Consequently, for any $\varepsilon >0$  there exists $R>1$ such that
\[
\sup_{|\xi| <R^{-1}}\frac{|\varphi(\xi) -\vp_0(\xi)|}{|\xi|^\a} \le 2\varepsilon + \frac{3\delta}{c_{\a,d,1}}.
\]
On the  other hand,
\[
 \sup_{ |\xi| \ge  R^{-1} } \frac{|\varphi(\xi) -\varphi_0(\xi)|}{|\xi|^\a} \le  
 R^{\a -\b} \sup_{ |\xi| \ge  R^{-1} } \frac{|\varphi(\xi) -\varphi_0(\xi)|}{|\xi|^\b} \le R^{\a -\b}\delta\,.
\]
Thus, we obtain \eqref{fourier-equiv} with $\b' = \a$.  Other cases are easier
because  for $\b' >0$,
\[
\sup_{|\xi| \ge R} \frac{|\varphi(\xi) -\varphi_0(\xi)|}{|\xi|^{\b'}} \rightarrow 0 \enskip (R \rightarrow \infty)\,.
\]

}

{The exact characterization formula \eqref{equivalence-space}, when 
$\a \ne1$,}
is a direct consequence of 
\eqref{fourier-int} and \eqref{uniform-moment-est}.  Suppose that, for $F_n, F \in P_\a(\RR^d)$,  we have 
$$
\varphi_n = \cF(F_n), \varphi = \cF(F) \in \cM^\a,\enskip \mbox{and} \enskip 
\displaystyle \lim_{n \rightarrow \infty} dis_{\a,\b}(\varphi_n, \varphi) = 0\,.
$$ 
Note that for $R >1$
\[
\int_{\{|\xi| \le 1/R\}} \frac{|1-\varphi_n (\xi)|}{|\xi|^{d+\alpha}} d \xi  
\le \int_{\{|\xi| \le 1/R\}} \frac{|1-\varphi(\xi)|}{|\xi|^{d+\alpha}} d \xi  + \|\varphi_n -\varphi\|_{\cM^\a} \,. 
\]
It follows from \eqref{uniform-moment-est} that for any $\varepsilon >0$ there exist $R >1$ and $N \in \NN$ such that 
 \[  
\int_{\{|v| \ge R \}} |v|^\alpha dF_n(v) +  \int_{\{|v| \ge R \}} |v|^\alpha dF(v) < \varepsilon\, \enskip \mbox{if $n \ge N$}.
\]
This shows \eqref{measure-convergence} because $\varphi_n \rightarrow \varphi$ in $\cS'(\RR^d)$ and so
$F_n \rightarrow F$ in $\cS'(\RR^d)$. And this completes the proof of the
theorem.
\end{proof}

\section{Proofs of Theorem \ref{existence-base-space} and Theorem  \ref{fourier-space}}\label{s-34}

The main purpose of this section concerns with the existence of measure
valued solutions in the new classification of the characteristic functions.
We only need to prove  Theorem  \ref{fourier-space} because Theorem \ref{existence-base-space} follows by using Theorem \ref{complete-1}.

\subsection{Existence under the cutoff assumption} As usual, the
existence for non-cutoff cross section is based on the cutoff approximations.
Hence,
in this subsection, we  assume that 
\begin{align}\label{ang-cutoff}
\gamma_2=\int_{\SS^2} b \Big (\frac{\xi\cdot\sigma}{|\xi|}\Big )  d\s = 2\pi \int_0^{\pi/2} b(\cos \theta) \sin \theta d \theta < \infty \,.
\end{align}

The existence and stability with cutoff cross section can be stated
as follows.

\begin{prop}\label{cutoff-existence}
Let $\a\in (0,2)$ and $b$ satisfy \eqref{ang-cutoff}. For every initial datum $\varphi_0 \in \cM^\a$, there exists 
a unique classical solution to the Cauchy problem \eqref{c-p-fourier} satisfying $\varphi(t) \in C([0, \infty), \cM^\a)$. 
Furthermore, if $\vp(t,\xi),\tilde \vp(t,\xi)\in C([0,\infty),\cM^\a)$ are two solutions to the Cauchy problem \eqref{c-p-fourier} 
with initial data $\vp_0,\tilde \vp_0\in\cM^\a$, respectively, then for any $t>0$ we have 
\begin{align}\label{cutoff-sta}
 & ||\vp(t)-\tilde \vp(t)||_{\cM^\a} \leq e^{\lam_\a t}||\vp_0-\tilde \vp_0||_{\cM^\a}\,,
\end{align}
where $\lambda_\a$ is defined by \eqref{05}.
\end{prop}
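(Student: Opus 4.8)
The plan is to prove Proposition~\ref{cutoff-existence} by a standard fixed-point/Duhamel argument in the complete metric space $(\cM^\a, dis_{\a,\a})$, exploiting that the cutoff assumption \eqref{ang-cutoff} makes the collision term a bounded perturbation. First I would rewrite \eqref{c-p-fourier} in mild (integral) form. Setting
\[
\gamma_2 = \int_{\SS^2} b\Big(\frac{\xi\cdot\sigma}{|\xi|}\Big)\,d\sigma\,,
\]
and using $\varphi(t,0)\equiv 1$ (which is preserved since $F_t$ stays a probability measure), the equation reads $\partial_t\varphi = -\gamma_2\varphi + \mathcal{G}[\varphi]$, where
\[
\mathcal{G}[\varphi](t,\xi) = \int_{\SS^2} b\Big(\frac{\xi\cdot\sigma}{|\xi|}\Big)\varphi(t,\xi^+)\varphi(t,\xi^-)\,d\sigma\,,
\]
hence the mild formulation
\[
\varphi(t,\xi) = e^{-\gamma_2 t}\varphi_0(\xi) + \int_0^t e^{-\gamma_2(t-\tau)}\,\mathcal{G}[\varphi](\tau,\xi)\,d\tau\,.
\]
The key structural fact is the algebraic identity $\xi^+ + \xi^- = \xi$ together with $|\xi^+|^2+|\xi^-|^2 = |\xi|^2$, i.e. $|\xi^\pm| = |\xi|\cos(\theta/2)$ or $|\xi|\sin(\theta/2)$ with $\xi^+\cdot\sigma/|\xi| = \cos\theta$; this is exactly what produces the eigenvalue $\lambda_\a$ in \eqref{05} when one tests against the weight $|\xi|^{-(d+\a)}$.

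Second, I would establish the two basic estimates on $\mathcal{G}$ that drive everything. Writing $1 - \varphi(\xi^+)\varphi(\xi^-) = (1-\varphi(\xi^+)) + \varphi(\xi^+)(1-\varphi(\xi^-))$ and using $|\varphi|\le 1$, one gets the pointwise bound
\[
|1 - \mathcal{G}[\varphi](\xi)/\gamma_2| \cdot \gamma_2 \le \int_{\SS^2} b\Big(\tfrac{\xi\cdot\sigma}{|\xi|}\Big)\big(|1-\varphi(\xi^+)| + |1-\varphi(\xi^-)|\big)\,d\sigma\,;
\]
dividing by $|\xi|^{d+\a}$, integrating in $\xi$, substituting $\xi\mapsto\xi^+$ resp. $\xi\mapsto\xi^-$ (with Jacobians $\cos^{-d}(\theta/2)$ resp. $\sin^{-d}(\theta/2)$ from the scaling and rotation) converts the weight factor and yields
\[
\|\gamma_2^{-1}\mathcal{G}[\varphi] - 1\|_{\cM^\a} \le (\gamma_2 + \lambda_\a)\,\|\varphi - 1\|_{\cM^\a}\,,
\]
with $\lambda_\a$ precisely as in \eqref{05}; this is the place where one must check that the $\theta$-integral $\int_0^{\pi/2} b(\cos\theta)(\cos^\a(\theta/2)+\sin^\a(\theta/2))\sin\theta\,d\theta$ converges, which holds because $\alpha < 2$ and \eqref{index-sing}/\eqref{ang-cutoff}. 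The analogous bilinear difference estimate, using $\varphi_1\varphi_2 - \tilde\varphi_1\tilde\varphi_2 = \varphi_1(\varphi_2-\tilde\varphi_2) + \tilde\varphi_2(\varphi_1-\tilde\varphi_1)$ and $|\varphi|,|\tilde\varphi|\le 1$, gives
\[
\|\mathcal{G}[\varphi] - \mathcal{G}[\tilde\varphi]\|_{\cM^\a} \le (\gamma_2 + \lambda_\a)\,\|\varphi - \tilde\varphi\|_{\cM^\a}\,.
\]

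Third, with these in hand the existence follows from a contraction argument on $C([0,T],\cM^\a)$ for $T$ small (the Duhamel map is Lipschitz with constant $\lesssim T(\gamma_2+\lambda_\a)$ in the $\|\cdot\|_{\cM^\a}$-metric, and $\varphi(t,0)=1$, $|\varphi(t,\xi)|\le 1$, positive-definiteness are all preserved because the cutoff equation preserves the class of Fourier transforms of probability measures — this is the classical Wild-sum / Bobylev observation), and then continued globally in time because the a priori bound $\|\varphi(t)-1\|_{\cM^\a}\le e^{\lambda_\a t}\|\varphi_0-1\|_{\cM^\a}$ (Grönwall applied to $\frac{d}{dt}\|\varphi-1\|_{\cM^\a} \le \lambda_\a\|\varphi-1\|_{\cM^\a}$, using the pointwise estimate above with the $-\gamma_2$ linear term exactly cancelling the $+\gamma_2$) prevents blow-up. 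The stability estimate \eqref{cutoff-sta} is obtained the same way: setting $w(t) = \|\varphi(t)-\tilde\varphi(t)\|_{\cM^\a}$, differentiating under the integral and using the bilinear difference estimate together with the cancellation of the $\gamma_2$ terms gives $\frac{d}{dt}w(t) \le \lambda_\a w(t)$, hence $w(t)\le e^{\lambda_\a t}w(0)$ by Grönwall. Continuity in $t$ with values in $\cM^\a$ follows from the mild formulation and dominated convergence once we know $\varphi(t)\in\cM^\a$ uniformly on compact time intervals.

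The main obstacle I anticipate is the change-of-variables/Jacobian bookkeeping in the $\cM^\a$-norm estimate: unlike the sup-norm $\|\cdot\|_\a$ used by Cannone--Karch (where the substitution $\xi^\pm = |\xi|\,(\ldots)\,\sigma$ only rescales lengths and the factor $\cos^\a$ or $\sin^\a$ pops out immediately), here the integration weight $|\xi|^{-(d+\a)}\,d\xi$ interacts with the $d$-dimensional Jacobian of $\xi\mapsto\xi^\pm$ for each fixed $\sigma$, and one has to verify that the $d$ powers from the Jacobian and the $-(d+\a)$ powers from the weight combine to leave exactly $\cos^{\a}(\theta/2)$ (resp.\ $\sin^\a(\theta/2)$) after also integrating over the sphere with the rotational invariance — getting this constant to be exactly $\lambda_\a + \gamma_2$ (so that the $\gamma_2$'s cancel and only $\lambda_\a$ survives in the Grönwall exponent) is the delicate point, and it is essential for matching \eqref{cutoff-sta}. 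A secondary technical point is justifying differentiation under the integral sign in $t$ (legitimate here because the cutoff kernel is integrable and the solution is bounded), and checking that the solution genuinely remains a characteristic function, i.e.\ $\varphi(t)\in\cK$; the latter is standard for the cutoff Boltzmann equation via the probabilistic (Wild sum) representation and I would simply invoke it.
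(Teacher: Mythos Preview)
Your proposal is correct and follows essentially the same route as the paper: mild/Duhamel reformulation, the Lipschitz estimate $\|\mathcal{G}[\varphi]-\mathcal{G}[\tilde\varphi]\|_{\cM^\a}\le\gamma_\a\|\varphi-\tilde\varphi\|_{\cM^\a}$ with $\gamma_\a=\gamma_2+\lambda_\a$ obtained via the polar change of variables $\xi\mapsto\xi^\pm$ (this is the paper's Lemma~\ref{lemm01}, and the Jacobian bookkeeping you flag as the main obstacle is carried out there explicitly), then Banach contraction on short time intervals and Gr\"onwall for the stability estimate \eqref{cutoff-sta}. The only point to make explicit is that the paper runs the fixed-point argument in the full metric $dis_{\a,\a}=\|\cdot\|_{\cM^\a}+\|\cdot\|_\a$ (pairing Lemma~\ref{lemm01} with the known $\cK^\a$ Lipschitz bound from Cannone--Karch), so you should state the companion $\|\cdot\|_\a$-estimate alongside your $\cM^\a$ one rather than leaving it implicit.
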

Following the subsection 4.2 of \cite{Cannone-Karch-1}, we consider
 the nonlinear operator,
$$
\cG(\vp)(\xi)\equiv\int_{\bS^2} b \Big (\frac{\xi\cdot\sigma}{|\xi|}\Big )\vp(\xi^+)\vp(\xi^-)d\sigma.
$$
Then,  problem \eqref{c-p-fourier} can be formulated by
\begin{align}\label{integral-eq}
\vp(\xi,t)=\vp_0(\xi)e^{-\g_2t}+\int_0^t e^{-\g_2(t-\tau)}\cG(\vp(\cdot,\tau))(\xi)d\tau.
\end{align}

For the nonlinear operator $\cG(\cdot)$, we have the following estimate.

\begin{lemm}\label{lemm01}
Let $\a\in (0,2)$ and $b$ satisfy \eqref{ang-cutoff}. For all $\vp,\tilde \vp\in\cM^\a$, we have
\begin{equation}\label{06}
  ||\cG(\vp)(\xi)-\cG(\tilde \vp)(\xi)||_{\cM^\a}\leq\g_\a||\vp-\tilde \vp||_{\cM^\a},
\end{equation}
where
\[
\g_\a= 2 \pi \int_0^{\pi/2} b\left( \cos \theta \right)
             \left(\cos^\a\frac\theta2+\sin^\a\frac\theta2\right) \sin \theta d \theta\,.
\]
\end{lemm}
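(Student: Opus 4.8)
The plan is to estimate the $\cM^\a$-norm of $\cG(\vp)-\cG(\tilde\vp)$ by inserting the standard telescoping decomposition
\[
\vp(\xi^+)\vp(\xi^-)-\tilde\vp(\xi^+)\tilde\vp(\xi^-)
=\bigl(\vp(\xi^+)-\tilde\vp(\xi^+)\bigr)\vp(\xi^-)
+\tilde\vp(\xi^+)\bigl(\vp(\xi^-)-\tilde\vp(\xi^-)\bigr),
\]
and then using the crucial fact that characteristic functions are bounded by $1$ in modulus, so $|\vp(\xi^-)|\le 1$ and $|\tilde\vp(\xi^+)|\le 1$. This gives the pointwise bound
\[
|\cG(\vp)(\xi)-\cG(\tilde\vp)(\xi)|
\le \int_{\SS^2} b\Bigl(\tfrac{\xi\cdot\sigma}{|\xi|}\Bigr)
\Bigl(|\vp(\xi^+)-\tilde\vp(\xi^+)|+|\vp(\xi^-)-\tilde\vp(\xi^-)|\Bigr)\,d\sigma\,.
\]

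Next I would divide by $|\xi|^{d+\a}$, integrate in $\xi$ over $\RR^d$ (here $d=3$), and exchange the order of the $\xi$- and $\sigma$-integrations by Tonelli. For each fixed $\sigma$ the two resulting terms are handled by the change of variables $\eta=\xi^+=\frac{\xi}{2}+\frac{|\xi|}{2}\sigma$ (respectively $\eta=\xi^-$). The geometry of the Bobylev variables gives $|\xi^+|=|\xi|\cos\frac\theta2$ and $|\xi^-|=|\xi|\sin\frac\theta2$ where $\cos\theta=\frac{\xi\cdot\sigma}{|\xi|}$, and since $\sigma$ is fixed the map $\xi\mapsto\xi^+$ is (after diagonalizing) an invertible linear map whose Jacobian is an explicit constant; one computes that
\[
\int_{\RR^d}\frac{|\vp(\xi^+)-\tilde\vp(\xi^+)|}{|\xi|^{d+\a}}\,d\xi
=\Bigl(\cos\tfrac\theta2\Bigr)^{\a}\int_{\RR^d}\frac{|\vp(\eta)-\tilde\vp(\eta)|}{|\eta|^{d+\a}}\,d\eta
=\Bigl(\cos\tfrac\theta2\Bigr)^{\a}\|\vp-\tilde\vp\|_{\cM^\a}\,,
\]
and similarly the $\xi^-$ term produces the factor $(\sin\frac\theta2)^{\a}$. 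Carrying out the $\sigma$-integral using $b$ depending only on $\cos\theta$ (so $\int_{\SS^2}(\cdots)\,d\sigma=2\pi\int_0^{\pi/2}(\cdots)\sin\theta\,d\theta$) then yields exactly the constant $\g_\a$ in the statement and completes the proof of \eqref{06}.

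The main obstacle — and the one point that deserves care — is the change-of-variables step: one must verify that for fixed $\sigma$ the substitution $\xi\mapsto\xi^+$ is a genuine bijection of $\RR^d$ (it fails to be linear only through the $|\xi|$ factor, but in the plane spanned by $\xi$ and $\sigma$ it is actually linear after writing $\xi=r\omega$ and noting the scaling), that the Jacobian and the ratio $|\xi^+|/|\xi|=\cos\frac\theta2$ are constant along rays, and that all integrals are finite so that Tonelli applies — which is guaranteed since $\vp,\tilde\vp\in\cM^\a$ means the relevant integrands are in $L^1$. A clean way to organize this is to pass to polar coordinates $\xi=\rho\omega$ first, perform the radial integral $\int_0^\infty\rho^{-1-\a}|\vp(\rho\omega^+)-\tilde\vp(\rho\omega^+)|\,d\rho$ with the rescaling $\rho\mapsto\rho/\cos\frac\theta2$, and then reassemble; this keeps the Jacobian bookkeeping trivial and makes the appearance of the factors $(\cos\frac\theta2)^\a$ and $(\sin\frac\theta2)^\a$ transparent.
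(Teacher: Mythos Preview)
Your overall strategy—telescoping decomposition, using $|\vp|\le 1$, then a change of variables to pull out the factors $(\cos\tfrac\theta2)^\a$ and $(\sin\tfrac\theta2)^\a$—is exactly the paper's. However, the change-of-variables step, which you correctly flag as the main obstacle, has a genuine gap.

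Your displayed identity
\[
\int_{\RR^d}\frac{|\vp(\xi^+)-\tilde\vp(\xi^+)|}{|\xi|^{d+\a}}\,d\xi
=\Bigl(\cos\tfrac\theta2\Bigr)^{\a}\|\vp-\tilde\vp\|_{\cM^\a}
\]
cannot hold as written: for fixed $\sigma$ the angle $\theta$ depends on $\omega=\xi/|\xi|$, so the right-hand side is ill-defined. The map $\xi\mapsto\xi^+=\tfrac12(\xi+|\xi|\sigma)$ is not linear, and its Jacobian $\det\bigl(\tfrac12(I+\sigma\omega^T)\bigr)=2^{1-d}\cos^2\tfrac\theta2$ varies with $\omega$. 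Your ``clean'' polar-coordinate rewrite handles only the radial rescaling $\rho\mapsto\rho\cos\tfrac\theta2$; afterward the angular variable is still $\omega$ while the integrand involves $\omega^+=(\omega+\sigma)/|\omega+\sigma|\ne\omega$, so you cannot ``reassemble'' into $\|\vp-\tilde\vp\|_{\cM^\a}$ without the angular Jacobian $d\omega=4\cos\tfrac\theta2\,d\omega^+$, which is not trivial and does not disappear on its own.

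The paper resolves this by changing \emph{both} angular variables at once: it sets $\tilde\omega=\omega^+$ (picking up the factor $4\cos\tilde\theta$, $\tilde\theta=\theta/2$), and then replaces $\sigma$ by a new $\tilde\sigma$ chosen so that $(\tilde\omega+\tilde\sigma)/|\tilde\omega+\tilde\sigma|=\sigma$, i.e.\ the reflection of $\tilde\omega$ across $\sigma$. The angle between $\tilde\omega$ and $\tilde\sigma$ is then again $\theta$, the map $\sigma\mapsto\tilde\sigma$ contributes $d\tilde\sigma=4\cos\tilde\theta\,d\sigma$, the two Jacobians cancel, and one is left with
\[
I_1=\int_{\RR^3}\int_{\SS^2} b\Bigl(\frac{\xi\cdot\sigma}{|\xi|}\Bigr)\frac{|\vp(\xi)-\tilde\vp(\xi)|}{|\xi|^{3+\a}}\Bigl(\cos\tfrac\theta2\Bigr)^{\a}d\xi\,d\sigma,
\]
with $\theta$ now the angle between the \emph{new} $\xi$ and $\sigma$. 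Only after this joint substitution does the $\sigma$-integral factor out to give the constant in $\gamma_\a$. Your outline would reach the same endpoint once this simultaneous change of $(\omega,\sigma)$ is carried out, but as it stands the argument is incomplete at precisely the place you identified as delicate.
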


\begin{proof}
Since
\begin{align*}
|\cG(\vp)(\xi)-\cG(\tilde \vp)(\xi)| &=\left|\int_{\bS^2} b \left(\frac{\xi\cdot\s}{|\xi|}\right)
 [(\vp^+-\tilde \vp^+)\vp^-+\tilde \vp^+(\vp^--\tilde \vp^-)]d\s\right| \\
 &\leq\int_{\bS^2} b \left(\frac{\xi\cdot\s}{|\xi|}\right)(|\vp^+-\tilde \vp^+|
+|\vp^--\tilde \vp^-|)d\s,\\
\end{align*}
we have
\begin{align*}
\int_{\bR^3}\frac{|\cG(\vp)(\xi)-\cG(\tilde \vp)(\xi)|}{|\xi|^{3+\a}}d\xi &\leq\int_{\bR^3}\int_{\bS^2} b \left(\frac{\xi\cdot\s}{|\xi|}\right)
 \frac{|\vp^+-\tilde \vp^+|}{|\xi|^{3+\a}}d\s d\xi\\
 &\qquad + \int_{\bR^3}\int_{\bS^2} b \left(\frac{\xi\cdot\s}{|\xi|}\right)
 \frac{|\vp^--\tilde \vp^-|}{|\xi|^{3+\a}}d\s d\xi\\
&=I_1+I_2.
\end{align*}
Using the polar coordinate, we have  
\begin{align*}
  I_1 & = \int_{\bR^3}\int_{\bS^2} b\left(\frac{\xi\cdot\s}{|\xi|}\right)
        \frac{|\vp^+-\tilde \vp^+|}{|\xi|^{3+\a}}d\s d\xi\\
    & = \int_0^\infty\int_{\o\in\bS^2}\int_{\s\in\bS^2} b(\o\cdot\s)
        \frac{|\vp(r\frac{\o+\s}2)-\tilde \vp(r\frac{\o+\s}2)|}{r^{3+\a}}~r^2drd\o d\s \\
& = \int_0^\infty\iint_{\tilde\o\in\bS^2,\s\in\bS^2 \atop <\tilde\o,\s>\leq\frac\pi4}
        b(\cos(2\tilde\theta))\frac{|\vp(r\cos\tilde\theta~\tilde\o)-\tilde \vp(r\cos\tilde\theta~\tilde\o)|}{r^{3+\a}}
        ~r^2 4\cos\tilde\theta drd\tilde\o d\s,
\end{align*}
where $\tilde\o=(\o+\s)/|\o+\s|, \theta=<\o \cdot \s>,  \tilde\theta=<\tilde\o \cdot \s>$.  Here $<\mathbf{a},\mathbf{b}>$ stands for the angle
between vectors $\mathbf{a}$ and $\vb$.
Select a new variable $\tilde\s$, such that $|\tilde\s|=1,\tilde\o+\tilde\s=
        \s|\tilde\o+\tilde\s|$ and notice that $\theta=<\tilde\o,\tilde\s>$. Then, letting $\tilde r=r\cos\frac\theta2$, we have
\begin{align*}
    I_1 & = \int_0^\infty\int_{\tilde\o\in\bS^2}\int_{\tilde\s\in\bS^2} b(\cos\theta)
        \frac{|\vp(r\cos\frac\theta2~\tilde\o)-\tilde \vp(r\cos\frac\theta2~\tilde\o)|}{r^{3+\a}}
        \enskip r^2drd\tilde\o d\tilde\s\\
   & =\int_0^\infty\int_{\tilde\o\in\bS^2}\int_{\tilde\s\in\bS^2} b(\cos\theta)
        \frac{|\vp(\tilde r\tilde\o)-\tilde \vp(\tilde r\tilde\o)|}{\tilde r^{3+\a}}
        \enskip\cos^\a\frac\theta2 \enskip\tilde r^2d\tilde r d\tilde\o d\tilde\s\\
    & =\int_{\bR^3}\int_{\s\in\bS^2} b \left(\frac{\xi\cdot\s}{|\xi|}\right)
        \frac{|\vp(\xi)-\tilde \vp(\xi)|}{|\xi|^{3+\a}}
        \enskip\cos^\a\frac\theta2 d\xi d\s.\\
\end{align*}
Similarly, one can obtain
$$
I_2=\int_{\bR^3}\int_{\s\in\bS^2} b\left(\frac{\xi\cdot\s}{|\xi|}\right)
        \frac{|\vp(\xi)-\tilde \vp(\xi)|}{|\xi|^{3+\a}}
        \enskip\sin^\a\frac\theta2 d\xi d\s.
$$
As a result,
\begin{align*}
I_1+I_2&=\int_{\bR^3}\int_{\s\in\bS^2}b \left(\frac{\xi\cdot\s}{|\xi|}\right)
      \frac{|\vp(\xi)-\tilde \vp(\xi)|}{|\xi|^{3+\a}}
      \enskip\left(\cos^\a\frac\theta2 +\sin^\a\frac\theta2\right)d\xi d\s\\
    &=\g_\a||\vp-\tilde \vp||_{\cM^\a}.
\end{align*}
And this completes the proof of the lemma.
\end{proof}

We are now ready to prove Proposition 3.1.

\begin{proof}[Proof of Proposition \ref{cutoff-existence}]
The solution to \eqref{c-p-fourier}  can be obtained as a fixed point of \eqref{integral-eq} via the Banach contraction principle to
the nonlinear operator 
$$
\cF(\vp)(t,\xi)\equiv\vp_0(\xi)e^{-\g_2t}+\int_0^te^{-\g_2(t-\tau)}\cG(\vp(\tau))(\xi)d\tau,
$$
for a fixed $\varphi_0 \in \cM^\a$.
To show this, for $T >0$, denote $
\tilde \cX_T^\a=C([0,T],\cM^\a)
$
supplemented with the metric 
\begin{align*}
||\vp-\p||_{\tilde \cX_T^\a}&=\sup_{\tau \in[0,T]}   dis_{\a,\a}( \vp(\tau), \p(\tau)) , \\
\Big(&= \sup_{\tau \in[0,T]}\Big(||\vp(\tau)-\p(\tau)||_{\cM^\a} +||\vp(\tau)-\p(\tau)||_\a\Big) \Big),
\end{align*}
for  $\vp,\p\in \tilde \cX_T^\a$. 
By Lemma 4.6 of \cite{Cannone-Karch-1} and Lemma \ref{lemm01}, we can obtain
\begin{eqnarray}
  ||\cF(\vp)-1||_{\tilde \cX_T^\a} &\leq& dis_{\a,\a}(\vp_0, 1) +\g_\a T||\vp-1||_{\tilde \cX_T^\a}\,,\\
  ||\cF(\vp)-\cF(\p)||_{\tilde \cX_T^\a} &\leq& \g_\a T||\vp-\p||_{\tilde \cX_T^\a}\quad 
  \forall \vp,\p\in \tilde \cX_T^\a.
\end{eqnarray}
Hence, $ \cF: \tilde \cX_T^\a\to \tilde \cX_T^\a$ is a contraction mapping, if we choose $T$ such that $\g_\a T<1.$ Then Banach contraction principle 
implies that there exists a unique solution on $[0,T]$ where $T$ is independent of the initial condition. Consequently, repeating this procedure, we can construct the unique solution on any finite time interval.

It follows from \eqref{c-p-fourier} that the function
\[
H(t,\xi) = \frac{\varphi(t,\xi) -\tilde \vp(t,\xi)}{|\xi|^{3+\a}}
\]
satisfies
\[
e^{\g_2 t} H(t,\xi) = H(0,\xi) + \int_0^t e^{\g_2 \tau}\frac{\cG(\vp(\tau))(\xi) -\cG(\tilde \vp(\tau))(\xi)}{|\xi|^{3+\a}} d\tau.
\]
Integrating with respect to $\xi$ over $\RR^3$, by Lemma \ref{lemm01} we have
\[
e^{\g_2 t} ||\vp(t)-\tilde \vp(t)||_{\cM^\a} \leq||\vp_0-\tilde \vp_0||_{\cM^\a} +\g_\a  \int_0^t e^{\g_2 \tau}
||\vp(\tau)-\tilde \vp(\tau)||_{\cM^\a} d\tau\,,
\]
which gives \eqref{cutoff-sta}. And this completes the proof of the
proposition.
\end{proof}

\subsection{Stability and Existence of Solutions in non-cutoff case}\label{3-2}
In this subsection, we 
assume that $b$ satisfies \eqref{index-sing} for some $\a_0 \in (0,2)$ and let  $\a \in [\a_0, 2)$. 
Let $b_n(\cos \theta) = \min \{b(\cos \theta), n\}$.  By Proposition \ref{cutoff-existence}, for any $\varphi_0 \in \cM^\a$ we have a unique solution  
$\vp_n(t,\xi) \in C([0,\infty); \cM^\alpha)$ to the cutoff Cauchy problem
\eqref{c-p-fourier} with $b$ replaced by $b_n$. If $\lambda_{\a, n}$ is defined by \eqref{05} with $b$ replaced by $b_n$, then it is obvious 
$\lambda_{\a, n} \le \lambda_\a$. Hence we have 
\begin{align}\label{M-estimate}
\|\vp_n (t) - 1 \|_{\cM^\alpha} \le e^{\lambda_\alpha t} \|\vp_0 - 1 \|_{\cM^\alpha}\,.
\end{align}
According to \cite{morimoto-12} ( an improvement of  \cite{Cannone-Karch-1}), it is proved that the unique solution $\varphi(t,\xi) \in C([0,\infty), \cK^\a)$ to
\eqref{c-p-fourier} for the initial datum $\varphi_0 \in \cK^\a$ can
be obtained as a limit of  a subsequence of $\{\varphi_n(t)\}_{n=1}^\infty$, by means of 
the Ascoli-Arzel\`a theorem.  
Namely, writing the subsequence $\{\varphi_n(t)\}_{n=1}^\infty$ again,
for any compact set $K \subset [0,\infty) \times \RR^3_\xi$,  we have
\begin{align}\label{uniform-convergence}
\lim_{n \rightarrow \infty} \sup_K \Big|\vp_ n(t,\xi) - \vp(t,\xi)\Big| =0 .
\end{align}
It follows from \eqref{M-estimate} that if $\varphi_0 \in \cM^\a$, then for any $0< \delta <1$ we have 
\[
\int_{\delta < |\xi| \le \delta^{-1}}  \frac{|\vp(t, \xi) - 1|}{|\xi|^{3+\alpha}} d \xi  =
\lim_{n \rightarrow \infty} 
\int_{\delta < |\xi| \le \delta^{-1}}  \frac{|\vp_n (t, \xi) - 1|}{|\xi|^{3+\alpha}} d \xi 
\le e^{\lambda_\alpha t} \|\vp_0 - 1 \|_{\cM_\alpha}\,.
\]
Letting $\delta \rightarrow 0$, we obtain $\vp(t,\xi) \in \cM^\a$ for each $t>0$.  By the same limiting procedure for \eqref{cutoff-sta},
we can show 
the stability estimate \eqref{04} in the non-cutoff case.

To complete the proof of Theorem  \ref{fourier-space},
we shall show $\vp(t,\xi) \in C([0,\infty), \cM^\a)$, that is, \eqref{fourier-continuity}. In view of two stability estimates 
\eqref{04}, \eqref{alpha-stability}, it suffices to show 
\begin{prop}\label{continuity-time-in-M}
Let $\varphi(t,\xi) $ be the unique solution to
\eqref{c-p-fourier} for the initial datum $\varphi_0 \in \cM^\a$. Then 
for any $0 \le s < t $, we have 
\begin{align}\label{1st}
&\|\vp(t) - \vp(s)\|_{\a} \lesssim 
|t-s| \sup_{s \le \tau \le t} \|1-\vp(\tau) \|_{\alpha}\,, \\
&\|\vp(t) - \vp(s)\|_{\cM^\a} \lesssim 
|t-s| \sup_{s \le \tau \le t} \|1-\vp(\tau) \|_{\cM^\alpha}\, . \label{second}
\end{align}

\end{prop}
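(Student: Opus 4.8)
The plan is to reduce both estimates to a single pointwise‑in‑time bound on the collision term on the right of \eqref{c-p-fourier}. Since $\varphi$ is a classical solution, for each $\xi$ the function $\varphi(\cdot,\xi)$ is $C^1$ and
\[
\varphi(t,\xi)-\varphi(s,\xi)=\int_s^t\partial_\tau\varphi(\tau,\xi)\,d\tau .
\]
Dividing by $|\xi|^\alpha$ (for \eqref{1st}) or by $|\xi|^{3+\alpha}$ (for \eqref{second}), taking absolute values and moving the $\tau$‑integral outside $\sup_\xi$ (resp.\ outside $\int_{\RR^3}\cdots d\xi$) by Tonelli, it suffices to prove, uniformly in $\tau$,
\[
\sup_{\xi}\frac{|\partial_\tau\varphi(\tau,\xi)|}{|\xi|^\alpha}\lesssim\|1-\varphi(\tau)\|_\alpha,\qquad \int_{\RR^3}\frac{|\partial_\tau\varphi(\tau,\xi)|}{|\xi|^{3+\alpha}}\,d\xi\lesssim\|1-\varphi(\tau)\|_{\cM^\alpha}.
\]
The algebraic device behind both is, writing $\varphi^{\pm}:=\varphi(\tau,\xi^{\pm})$,
\[
\varphi^{+}\varphi^{-}-\varphi(\tau,\xi)=\bigl(\varphi^{+}-\varphi(\tau,\xi)\bigr)+\bigl(\varphi^{-}-1\bigr)+\bigl(\varphi^{+}-1\bigr)\bigl(\varphi^{-}-1\bigr),
\]
together with $|\xi^{+}|=|\xi|\cos(\theta/2)$ and $|\xi^{-}|=|\xi|\sin(\theta/2)$, where $\cos\theta=\xi\cdot\sigma/|\xi|$.

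The middle term and the product term are handled in both norms by one and the same estimate. From $|\varphi^{-}-1|\le\|1-\varphi(\tau)\|_\alpha|\xi^{-}|^\alpha=\|1-\varphi(\tau)\|_\alpha|\xi|^\alpha(\sin(\theta/2))^\alpha$ and $|\varphi^{+}-1|\le2$, the $\sigma$‑integral of either term contributes the finite factor $2\pi\int_0^{\pi/2}b(\cos\theta)(\sin(\theta/2))^\alpha\sin\theta\,d\theta$, which converges by \eqref{index-sing} because $\alpha\ge\alpha_0>2s$; in the $\cM^\alpha$‑norm one first performs the Bobylev‑type change of variables used in the proof of Lemma \ref{lemm01} (with $\tilde\varphi\equiv1$), which turns $\int_{\RR^3}|\xi|^{-3-\alpha}|\varphi^{-}-1|\,d\xi$ into $(\sin(\theta/2))^\alpha\|1-\varphi(\tau)\|_{\cM^\alpha}$ before the angular integration.

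The only delicate term — and the step I expect to be the main obstacle — is the difference $\varphi^{+}-\varphi(\tau,\xi)=\varphi(\tau,\xi^{+})-\varphi(\tau,\xi)$. For \eqref{second} I would use $|\varphi(\tau,\xi^{+})-\varphi(\tau,\xi)|\le\int_{\RR^3}|e^{-iv\cdot(\xi^{+}-\xi)}-1|\,dF_\tau(v)$, with $\xi^{+}-\xi=\tfrac{|\xi|}{2}(\sigma-\xi/|\xi|)$ (for $1\le\alpha<2$ one first subtracts the first‑order Taylor terms, legitimate by the mean‑zero normalization \eqref{mean}), interchange the $v$‑, $\xi$‑ and $\sigma$‑integrations by Tonelli, evaluate the $\xi$‑integral in polar coordinates — the radial integral producing $|v\cdot(\sigma-\xi/|\xi|)/2|^\alpha$ times a finite constant — and bound $|v\cdot(\sigma-\xi/|\xi|)|\le2|v|\sin(\theta/2)$. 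The outcome is controlled by a constant times $\bigl(\int_{\RR^3}|v|^\alpha dF_\tau(v)\bigr)\int_0^{\pi/2}b(\cos\theta)(\sin(\theta/2))^\alpha\sin\theta\,d\theta$, which is $\lesssim\|1-\varphi(\tau)\|_{\cM^\alpha}$ by the moment bound \eqref{moment-es}; integrating in $\tau$ gives \eqref{second}.

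For \eqref{1st} the same decomposition is used, but the difference term must now be bounded in $\|\cdot\|_\alpha$, where $F_\tau$ carries no usable moment. Here the decisive ingredient is the positive‑definiteness estimate $|\varphi(\tau,\xi^{+})-\varphi(\tau,\xi)|^2\le2\,\re\bigl(1-\varphi(\tau,\xi^{+}-\xi)\bigr)\le2\|1-\varphi(\tau)\|_\alpha|\xi|^\alpha(\sin(\theta/2))^\alpha$, used for $\theta$ near $0$ to supply the angular decay needed against the non‑integrable kernel $b(\cos\theta)\sin\theta$, combined with the crude bound $|\varphi(\tau,\xi^{+})-\varphi(\tau,\xi)|\le\|1-\varphi(\tau)\|_\alpha(|\xi^{+}|^\alpha+|\xi|^\alpha)$ away from $0$; reconciling these uniformly in $\xi$ is the genuine difficulty. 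This is precisely the mechanism behind the $\cK^\alpha$ time‑continuity established in \cite{morimoto-12} (cf.\ Remark \ref{rem-1234}), so \eqref{1st} may simply be quoted from there; alternatively, the moment argument of the previous paragraph yields the slightly weaker bound $\|\varphi(t)-\varphi(s)\|_\alpha\lesssim|t-s|\sup_{[s,t]}\|1-\varphi(\tau)\|_{\cM^\alpha}$, which already suffices, together with the stability estimates \eqref{04} and \eqref{alpha-stability} applied with $\tilde\varphi\equiv1$, to deduce \eqref{fourier-continuity}.
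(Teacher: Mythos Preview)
Your overall strategy is the same as the paper's: write $\varphi(t,\xi)-\varphi(s,\xi)$ as a time integral of the Bobylev collision term and bound the latter pointwise in $\tau$; your treatment of \eqref{1st} by citing \cite{morimoto-12} matches the paper exactly, and your handling of the two ``easy'' pieces $(\varphi^{-}-1)$ and $(\varphi^{+}-1)(\varphi^{-}-1)$ in both norms is correct (the change of variables you invoke from Lemma~\ref{lemm01} is purely geometric and does not need the cutoff).

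There is, however, a genuine gap in your argument for \eqref{second} when $\alpha\in[1,2)$. You bound
\[
|\varphi(\tau,\xi^{+})-\varphi(\tau,\xi)|\le\int_{\RR^3}\bigl|e^{-iv\cdot(\xi^{+}-\xi)}-1\bigr|\,dF_\tau(v)
\]
and then claim that for $1\le\alpha<2$ one may ``subtract the first-order Taylor term, legitimate by \eqref{mean}''. But the identity you start from is $\varphi(\xi^{+})-\varphi(\xi)=\int e^{-iv\cdot\xi}\bigl(e^{-iv\cdot(\xi^{+}-\xi)}-1\bigr)\,dF_\tau(v)$, and the prefactor $e^{-iv\cdot\xi}$ destroys the mean-zero cancellation: the correction you would need to insert is $\int e^{-iv\cdot\xi}\,iv\cdot(\xi^{+}-\xi)\,dF_\tau(v)=-\,(\xi^{+}-\xi)\cdot\nabla\varphi(\tau,\xi)$, which is not zero. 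Without this subtraction, your radial integral $\int_0^\infty|e^{-i\rho a}-1|\rho^{-1-\alpha}d\rho$ diverges at $\rho=0$ for every $\alpha\ge1$; with the naive subtraction $+ic$ (where $c=v\cdot(\xi^+-\xi)$), the integrand is $\lesssim |c|$ at infinity, and the integral still diverges for $\alpha=1$. So the moment route, as written, only covers $\alpha\in(0,1)$.

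The paper avoids this obstruction by a different decomposition of the dangerous piece (Lemma~\ref{main-M-lemma}). Instead of estimating $\varphi(\xi^{+})-\varphi(\xi)$ directly, it introduces the projection $\zeta=\bigl(\xi^{+}\cdot\tfrac{\xi}{|\xi|}\bigr)\tfrac{\xi}{|\xi|}$ and the reflected point $\tilde\xi^{+}=2\zeta-\xi^{+}$, and writes, \emph{before} taking absolute values in $\sigma$,
\[
\int_{\SS^2}b\,(\varphi(\xi^{+})-\varphi(\xi))\,d\sigma
=\tfrac12\int_{\SS^2}b\,(\varphi(\xi^{+})+\varphi(\tilde\xi^{+})-2\varphi(\zeta))\,d\sigma
+\int_{\SS^2}b\,(\varphi(\zeta)-\varphi(\xi))\,d\sigma .
\]
The first integrand satisfies $|\varphi(\xi^{+})+\varphi(\tilde\xi^{+})-2\varphi(\zeta)|\le 2\,\re(1-\varphi(\eta^{+}))$ with $\eta^{+}=\xi^{+}-\zeta$, so after the $\xi\to\eta^{+}$ change of variables it is controlled by $\|1-\varphi\|_{\cM^\alpha}$ with angular weight $(\sin\theta)^{1+\alpha}$. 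For the second piece, $\zeta-\xi$ is parallel to $\xi$ with $|\zeta-\xi|=|\xi|\sin^{2}(\theta/2)$, and the positive-definiteness inequality \eqref{important-est} then gives angular weight $\sin^{\alpha}(\theta/2)$ after Cauchy--Schwarz in $\xi$. Both weights are integrable against $b(\cos\theta)\sin\theta$ for the full range $\alpha\in[\alpha_0,2)$, which is exactly what your approach is missing. If you want to repair your argument rather than adopt the paper's decomposition, the key point is that the absolute value must stay outside the $\sigma$-integral long enough to exploit either the symmetrization $\xi^{+}\leftrightarrow\tilde\xi^{+}$ or the inequality \eqref{important-est}.
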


The first estimate is a direct consequence of 
the formula 
\begin{align*}
\vp(t,\xi) -\vp(s,\xi) 
= \int_s^t \int_{\SS^2}b\left(\frac{\xi \cdot \sigma}{|\xi|}\right) \Big( \vp(\tau, \xi^+)\vp (\tau, \xi^-) - \vp (\tau, \xi)
\Big) d\sigma d \tau,
\end{align*}
and Lemma 2.2 of \cite{morimoto-12}. The second one follows from 
the following lemma which is a variant of Lemma 2.2 of \cite{morimoto-12}; 

\begin{lemm}\label{main-M-lemma} 
If $\varphi \in \cM^\alpha$ for $\alpha \in [\alpha_0,2)$, then 
\begin{align}\label{funda}
&\int_{\RR^3_\xi} \frac{1}{|\xi|^{3+\alpha}} \left|\int_{\SS^2}b\left(\frac{\xi \cdot \sigma}{|\xi|}\right) \Big( \varphi(\xi^+)\varphi(\xi^-) - \varphi(\xi)
\Big) d\sigma\right| d\xi\\
&\qquad \qquad 
\lesssim \left(\int_0^1 (1-\tau)^{\alpha/2} b(\tau) d\tau\right)
\|1-\varphi\|_{\cM^\alpha}\,. \notag
\end{align}
\end{lemm}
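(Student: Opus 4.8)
The plan is to split the integrand using the elementary identity
\begin{align*}
\varphi(\xi^+)\varphi(\xi^-) - \varphi(\xi)
= \bigl(\varphi(\xi^+)-1\bigr)\bigl(\varphi(\xi^-)-1\bigr) + \bigl(\varphi(\xi^+)-1\bigr) + \bigl(\varphi(\xi^-)-1\bigr) - \bigl(\varphi(\xi)-1\bigr),
\end{align*}
so that the left-hand side of \eqref{funda} is controlled by four terms. The linear terms involving $\varphi(\xi^\pm)-1$ are handled exactly as in the computation of $I_1, I_2$ in the proof of Lemma \ref{lemm01}: after the substitutions $\tilde\omega = (\omega+\sigma)/|\omega+\sigma|$, $\tilde\xi = r\cos(\theta/2)\,\tilde\omega$ for the $\xi^+$ piece (and the analogue with $\sin$ for the $\xi^-$ piece), each of these integrals equals
\[
\int_{\RR^3}\int_{\SS^2} b\!\left(\tfrac{\xi\cdot\sigma}{|\xi|}\right)\frac{|\varphi(\xi)-1|}{|\xi|^{3+\alpha}}\,\cos^\alpha\tfrac\theta2\,d\sigma\,d\xi \quad\text{(resp. with }\sin^\alpha\text{)}.
\]
Combined with the $-(\varphi(\xi)-1)$ term, these three contributions assemble into
\[
2\pi\int_0^{\pi/2} b(\cos\theta)\Bigl(\cos^\alpha\tfrac\theta2 + \sin^\alpha\tfrac\theta2 - 1\Bigr)\sin\theta\,d\theta\;\|1-\varphi\|_{\cM^\alpha},
\]
and the trigonometric factor $\cos^\alpha(\theta/2)+\sin^\alpha(\theta/2)-1$ is $O((\sin(\theta/2))^\alpha) = O(\theta^\alpha)$ as $\theta\to 0$ since $\alpha<2$; writing $\tau=\cos\theta$ this is comparable to $(1-\tau)^{\alpha/2}$, which is exactly the weight appearing on the right of \eqref{funda}. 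So these terms are fine and the angular integral converges precisely under \eqref{index-sing} because $\alpha\ge\alpha_0$.

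The genuinely new term is the quadratic one, $\int |\xi|^{-3-\alpha}\int_{\SS^2} b\,|\varphi(\xi^+)-1|\,|\varphi(\xi^-)-1|\,d\sigma\,d\xi$. Here I would use $|\varphi(\xi^-)-1|\le 2$ to throw away that factor at the cost of losing nothing worse than a constant, reducing to the already-treated $\xi^+$ integral — but that only gives a bound by $\int_0^{\pi/2} b(\cos\theta)\cos^\alpha(\theta/2)\sin\theta\,d\theta\,\|1-\varphi\|_{\cM^\alpha}$, whose angular integral does \emph{not} converge near $\theta=0$ under \eqref{index-sing} alone (the factor $\cos^\alpha(\theta/2)\to 1$). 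The fix is to instead exploit that $|\xi^-| = |\xi|\sin(\theta/2)$ is small when $\theta$ is small: use $|\varphi(\xi^-)-1|\le \|1-\varphi\|_{\cM^\alpha}$-type control is not pointwise, so instead bound $|\varphi(\xi^-)-1| \lesssim \min\{2, \text{something vanishing in }\theta\}$ is not available pointwise either. The correct route, following the structure of Lemma 2.2 of \cite{morimoto-12}, is to keep \emph{both} factors, change variables so that $\tilde\xi = r\cos(\theta/2)\tilde\omega$ as before turning $|\varphi(\xi^+)-1|/|\xi|^{3+\alpha}$ into $\cos^\alpha(\theta/2)|\varphi(\tilde\xi)-1|/|\tilde\xi|^{3+\alpha}$, and simultaneously note that in these coordinates $\xi^- $ has length $|\tilde\xi|\tan(\theta/2)$, so $|\varphi(\xi^-)-1|\le 2$ can be upgraded: we do not need it to vanish, because after the change of variables the \emph{remaining} angular density already carries an extra $\sin$-type factor. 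Concretely the Jacobian/solid-angle bookkeeping (the $4\cos\tilde\theta$ factor and the passage $\tilde\theta\leftrightarrow\theta$) produces, for the quadratic term, the weight $\cos^\alpha(\theta/2)\,(\sin(\theta/2))^{?}$; I expect that matching the claimed $(1-\tau)^{\alpha/2}=(\sin(\theta/2))^\alpha 2^{\alpha/2}$-type weight forces using $|\varphi(\xi^-)-1|\lesssim (|\xi^-|/|\xi|)^{0}$ is wrong, and one genuinely needs the bound $|\varphi(\xi^-)-1|\le \|1-\varphi\|_\alpha |\xi^-|^\alpha = \|1-\varphi\|_\alpha |\xi|^\alpha (\sin(\theta/2))^\alpha$. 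Since $\cM^\alpha\subset\cK^\alpha$ with $\|1-\varphi\|_\alpha \lesssim \|1-\varphi\|_{\cM^\alpha}$ by \eqref{embed}, this is legitimate and supplies exactly the extra $(\sin(\theta/2))^\alpha$ needed.

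So the key steps, in order, are: (i) algebraic splitting into linear + quadratic parts; (ii) for the linear parts, the change-of-variables computation from Lemma \ref{lemm01} verbatim, yielding the $\cos^\alpha(\theta/2)+\sin^\alpha(\theta/2)-1$ factor and hence the $(1-\tau)^{\alpha/2}$ weight after $\tau=\cos\theta$; (iii) for the quadratic part, bound $|\varphi(\xi^-)-1|\le \|1-\varphi\|_\alpha |\xi^-|^\alpha$ (using $\cM^\alpha\hookrightarrow\cK^\alpha$), then apply the same change of variables to the $\varphi(\xi^+)-1$ factor; the $|\xi^-|^\alpha = |\xi|^\alpha\sin^\alpha(\theta/2)$ combines with the surviving angular measure to give an integrand bounded by $b(\cos\theta)\cos^\alpha(\theta/2)\sin^\alpha(\theta/2)\sin\theta$, whose angular integral is again $\lesssim \int_0^1(1-\tau)^{\alpha/2}b(\tau)d\tau$ and converges under \eqref{index-sing}; (iv) sum the four contributions. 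The main obstacle is step (iii): getting the bookkeeping of the spherical change of variables right so that one genuinely extracts the extra $\sin^\alpha(\theta/2)$ factor rather than merely $\cos^\alpha(\theta/2)$, because without it the $b$-integral diverges — this is precisely where one must not be careless and must invoke the $\cK^\alpha$-seminorm control of $\varphi(\xi^-)-1$ rather than the crude pointwise bound by $2$.
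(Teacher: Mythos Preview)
Your decomposition is natural, but step (ii) has a genuine gap. In the non-cutoff case the pieces $\int_{\SS^2}b\,(\varphi(\xi^+)-1)\,d\sigma$ and $(\varphi(\xi)-1)\int_{\SS^2}b\,d\sigma$ are separately divergent (only the $\xi^-$ piece converges, since $|\varphi(\xi^-)-1|\lesssim\sin^\alpha(\theta/2)$). Hence the ``assembly'' into the factor $\cos^\alpha(\theta/2)+\sin^\alpha(\theta/2)-1$ is purely formal: once you take the absolute value of the $\sigma$-integral you cannot recover the minus sign via the triangle inequality, and without it the angular integral diverges. The change-of-variables identities from Lemma~\ref{lemm01} were proved under the cutoff hypothesis \eqref{ang-cutoff} precisely so that each piece is finite; that hypothesis is unavailable here. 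The paper extracts the needed cancellation \emph{pointwise} before breaking up the $\sigma$-integral: writing $\zeta=(\xi^+\cdot\xi/|\xi|)\,\xi/|\xi|$ and letting $\tilde\xi^+$ be the reflection of $\xi^+$ across the $\xi$-axis, one symmetrizes $\int_{\SS^2}b\,(\varphi(\xi^+)-\varphi(\xi))\,d\sigma=\tfrac12\int_{\SS^2}b\,(\varphi(\xi^+)+\varphi(\tilde\xi^+)-2\varphi(\xi))\,d\sigma$, then uses that $\varphi(\xi^+)+\varphi(\tilde\xi^+)-2\varphi(\zeta)$ is a genuine second difference, bounded via the characteristic-function representation by quantities of the form $|1-\varphi(\eta^+)|$ with $|\eta^+|=|\xi|\cos(\theta/2)\sin(\theta/2)$; the remaining piece $\varphi(\zeta)-\varphi(\xi)$ is handled by the increment inequality \eqref{important-est}. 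This pointwise second-order structure is the missing idea.

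Your step (iii) also contains a bookkeeping slip: after bounding $|\varphi(\xi^-)-1|\le\|1-\varphi\|_\alpha\,|\xi|^\alpha\sin^\alpha(\theta/2)$, the factor $|\xi|^\alpha$ cancels against the denominator, leaving the weight $|\xi|^{-3}$; the $\xi\mapsto\xi^+$ change of variables then produces $\int_{\RR^3}|\varphi(\xi)-1|\,|\xi|^{-3}\,d\xi$ (with \emph{no} $\cos^\alpha(\theta/2)$ factor), and this integral diverges at infinity. The easy fix is the opposite choice: bound $|\varphi(\xi^+)-1|\le 2$ and apply the $\xi^-$ change of variables, which yields the $\sin^\alpha(\theta/2)$ weight and the correct $\|1-\varphi\|_{\cM^\alpha}$ factor directly --- this is exactly how the paper handles its term $I_3$.
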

\begin{proof} As in \cite{morimoto-12}, we put  $\zeta = \left(\xi^+ \cdot \frac{\xi}{|\xi|}\right) \frac{\xi}{|\xi|}$ and 
consider $\tilde \xi^+ = \zeta -(\xi^+-\zeta)$, 
which is
symmetric to $\xi^+$ on $\SS^2$, see Figure 1. 
\begin{figure}[tbh]\label{f-2-3}
\begin{center}
\includegraphics[width=2.5in]{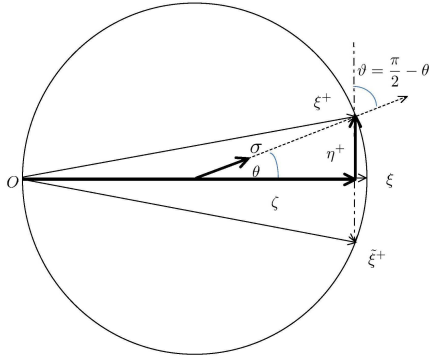}
\caption{$\theta = < \frac{\xi}{|\xi|} \cdot \sigma > $, \enskip  $\vartheta = <\frac{\eta^+}{|\eta^+|}, \sigma>$, $\eta^+ =\xi^+ -\zeta$. }
\end{center}
\end{figure}
We divide the integrand of the left hand side of \eqref{funda} into
\begin{align*}
&\int_{\SS^2}b\left(\frac{\xi \cdot \sigma}{|\xi|}\right) \Big( \varphi(\xi^+)\varphi(\xi^-) - \varphi(\xi)
\Big) d\sigma\\
&= \int_{\SS^2}b\left(\frac{\xi \cdot \sigma}{|\xi|}\right) \Big( \varphi(\xi^+) - \varphi(\xi)
\Big) d\sigma + \int_{\SS^2}b\left(\frac{\xi \cdot \sigma}{|\xi|}\right) \varphi(\xi^+)
\Big(\varphi(\xi^-) -1\Big) d\sigma\\
&= \frac{1}{2}\int_{\SS^2}b\left(\frac{\xi \cdot \sigma}{|\xi|}\right) \Big( \varphi(\xi^+) + \varphi(\tilde \xi^+) - 2 \varphi(\xi)
\Big) d\sigma \\
& \qquad \qquad + \int_{\SS^2}b\left(\frac{\xi \cdot \sigma}{|\xi|}\right) \varphi(\xi^+)
\Big(\varphi(\xi^-) -1\Big) d\sigma\\
&= \frac{1}{2}\int_{\SS^2}b\left(\frac{\xi \cdot \sigma}{|\xi|}\right) \Big( \varphi(\xi^+) + \varphi(\tilde \xi^+) - 2 \varphi(\zeta) \Big)d\sigma \\
&\quad +\int_{\SS^2}b\left(\frac{\xi \cdot \sigma}{|\xi|}\right)
\Big(\varphi(\zeta) - \varphi(\xi) \Big) d\sigma
+ \int_{\SS^2}b\left(\frac{\xi \cdot \sigma}{|\xi|}\right) \varphi(\xi^+)
\Big(\varphi(\xi^-) -1\Big) d\sigma\\
& = I_1(\xi) + I_2(\xi) + I_3(\xi)\,.
\end{align*}
Putting $\eta^+ = \xi^+ - \zeta$, we have 
\begin{align*}
&|\varphi(\xi^+) + \varphi(\tilde \xi^+) - 2 \varphi(\zeta) |=\left
|\int_{\RR^3} e^{-i\zeta \cdot v}\Big( e^{-i \eta^+ \cdot v} + e^{ i \eta^+\cdot v}  - 2\Big)d F(v) \right|\\
&\qquad \le \int_{\RR^3}
|e^{-i \zeta \cdot v}| \Big( 2 -e^{-i\eta^+ \cdot v} - e^{i\eta^+ \cdot v}\Big) dF (v). 
\end{align*}
Noting  $|\eta^+| = |\xi| \cos(\theta/2)\sin(\theta/2)$ and using the change of variables
$\xi \rightarrow ( \xi^+ \rightarrow ) \eta^+$,
we have 
\begin{align*}
\int_{\RR^3_\xi}
\frac{|I_1(\xi)|}{|\xi|^{3+\alpha}} d\xi  &\lesssim 
\int_{\RR^3}\frac{|1-\varphi(\pm \eta^+)|}{
| \eta^+ |^{3+\a}}d \eta^+
 \int_0^{\pi/2}(\sin \theta)^{1+\alpha} b(\cos \theta)\cos \theta 
d \theta\\
& \lesssim \|1-\varphi \|_{\cM^\a}\,,
\end{align*}
where by the same arguments in Section 4 of \cite{MUXY-DCDS} (cf. \cite{ADVW}), we have used the fact that 
\begin{align*}
d\xi = \frac{4d\xi^+}{\cos^2(\theta/2)} \enskip, \enskip d \xi^+ = \frac{4 d\eta^+}{\sin^2 (\theta/2)}\, \enskip , \enskip
d\eta^+ d \sigma  = 2 \pi \sin \vartheta d \vartheta  d \eta^+ \enskip (\vartheta = \pi/2 - \theta)\,.
\end{align*}

Notice that 
\begin{align}\label{important-est}
|\varphi(\xi) - \varphi(\xi+\eta)| \le 4 |1-\varphi(\xi)|^{1/2}  |1-\varphi(\eta)|^{1/2}  + |1-\varphi(\eta)| \,,
\end{align}
which was proved in the proof of (19) in \cite{morimoto-12}.
By \eqref{important-est} with $\eta = \zeta -\xi$,  we have
\begin{align*}
\int_{\RR^3_\xi}
\frac{|I_2(\xi)|}{|\xi|^{3+\alpha}} d\xi & \lesssim \int_{\SS^2}b\left(\frac{\xi \cdot \sigma}{|\xi|}\right)
\int_{\RR^3}\frac{|1-\varphi(\xi)|^{1/2} |1-\varphi(\zeta-\xi)|^{1/2}}{
| \xi |^{3+\a}}d \xi d\sigma \\
&\quad + \int_{\SS^2}b\left(\frac{\xi \cdot \sigma}{|\xi|}\right)
\int_{\RR^3}\frac{|1-\varphi(\zeta-\xi)|}{
| \xi |^{3+\a}}d \xi d\sigma \\
& = J_1 + J_2 \,.
\end{align*}
Using the Schwarz inequality, we have 
\begin{align*}
J_1 & \le\int_{\SS^2}b\left(\frac{\xi \cdot \sigma}{|\xi|}\right)\Big( 
\int_{\RR^3}\frac{|1-\varphi(\xi)|}{
| \xi |^{3+\a}}d \xi  \Big)^{1/2}\Big( 
\int_{\RR^3}\frac{|1-\varphi(\zeta- \xi)|}{
| \xi |^{3+\a}}d \xi  \Big)^{1/2}d\sigma \\
&= 2\pi  \|1-\varphi \|_{\cM^\a}^{1/2} \Big( 
\int_{\RR^3}\frac{|1-\varphi(\eta)|}{
| \eta |^{3+\a}}d \eta  \Big)^{1/2}
\int_{0}^{\pi/2} b(\cos \theta)\sin^\a(\theta/2) \sin \theta  d\theta  \\
&\lesssim \|1-\varphi \|_{\cM^\a}\,,
\end{align*}
where we have used $|\eta|=|\zeta -\xi| = |\xi|\sin^2(\theta/2)$. The estimation for 
$J_2$ is easier, so that we omit its proof.
Also since  similar estimate holds for $I_3(\xi)$, we obtain the desired estimate  \eqref{funda}. And this completes the proof of the lemma.
\end{proof}

\subsection{Continuity of measure valued solutions} 
Finally in this subsection, we discuss the continuity of the solutions
obtained above.
Assume that $b$ satisfies \eqref{index-sing} for some $\a_0 \in (0,2)$ and let  $\a \in [\a_0, 2), \a \ne 1$. 
If $F_0 \in P_\a(\RR^3)$, then it follows from \eqref{equivalence-space} that $\vp_0 = \cF(F_0)$ belongs to $\cM^\a$.
By Theorem \ref{fourier-space} and \eqref{equivalence-space}, 
there exists a unique measure valued solution $F_t \in C([0,\infty), P_\a(\RR^3))$ to the Cauchy problem 
\eqref{bol}-\eqref{initial}. Here the continuity with respect to $t$ 
is in the following sense:
\begin{align*}
&\lim_{t \rightarrow t_0} \int \psi(v) dF_t(v) = \int \psi(v) dF_{t_0}(v)  
\mbox{ for any  $\psi \in C(\RR^d)$} \\
&\mbox{\qquad  \qquad \qquad 
 satisfying 
the growth condition $|\psi(v)| \lesssim \la v \ra^\a$.} 
\end{align*}
This continuity follows from the  same argument as in the last paragraph of Section \ref{s2}. 
Indeed, the fact that $\cF(F_t) =\varphi(t,\xi) \in C([0,\infty), \cM^\a)$ yields  
\begin{align*}
\int_{\{|\xi| \le 1/R\}} \frac{|1-\vp(t, \xi)|}{|\xi|^{3+\alpha}} d \xi \le \int_{\{|\xi| \le 1/R\}} \frac{|1-\vp(t_0, \xi)|}{|\xi|^{3+\alpha}} d \xi
+ \|\vp(t) - \vp(t_0)\|_{\cM^\a},
\end{align*}
and hence for any $\varepsilon >0$, there exist  $\delta >0$ and $R >0$ such that 
\[
\sup_{t \in [t_0 -\delta, t_0 +\delta]} \int_{\{|\xi| \le 1/R\}} \frac{|1-\vp(t, \xi)|}{|\xi|^{3+\alpha}} d \xi < \varepsilon.
\]
This and \eqref{uniform-moment-est} imply that 
\[
\sup_{t \in [t_0 -\delta, t_0 +\delta]} \int_{\{|v| \ge R\}} |v|^\alpha dF_t(v) < \varepsilon / 2c_{\alpha,3,1}.
\]
Since $
F_t \in C([0,\infty), P)$,  we  have $
F_t \in C([0,\infty), P_\a)$. {Thus Theorem \ref{existence-base-space} is now proved.
If $F_0 \in P_1(\RR^3)$ satisfies $
 \int_{\RR^3} \la v \ra \log \la v \ra dF_0(v) < \infty
$, then it follows from Proposition \ref{chara-1} that  $\vp_0 =\cF(F_0)$ belongs to $\cM^1$, and hence the last statement of Remark  \ref{remark-1} is
also obvious. On the other hand, if we 
simply assume $F_0 \in P_1(\RR^3)$, noting that the index  $\a_0$ of the condition
\eqref{index-sing} is in $(0,1]$,  we have a weaker result, \eqref{1-mome} in Remark  \ref{remark-1}, by means of the following proposition.
}
\begin{prop}\label{propagation-moments}
Let $\alpha \ge 1$ and let $b$ satisfy \eqref{index-sing} with $\a_0 =1$.  If the initial data $F_0 \in P_\alpha$, then 
the unique solution $F_t$ belongs to $ P_\alpha$ for any $t>0$. More 
precisely, there exists a constant $C >0$ independent of $t$ such that
\begin{align}\label{prop-moment}
\int \la v \ra^\alpha dF_t(v) \le C e^{Ct} \int\la v \ra^\alpha  dF_0(v)\,.
\end{align}
\end{prop}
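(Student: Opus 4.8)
The plan is to prove \eqref{prop-moment} by a Povzner-type weighted estimate on the collision operator together with a Gronwall argument, carried out uniformly along the angular cutoff approximations $b_n=\min\{b,n\}$ of Subsection~\ref{3-2}, and then to pass to the limit using lower semicontinuity of moments. Write $m_\alpha(t)=\int_{\RR^3}\la v\ra^\alpha\,dF_t(v)$ and $m_\alpha^{(n)}(t)=\int_{\RR^3}\la v\ra^\alpha\,dF_t^{(n)}(v)$, where $F_t^{(n)}=\cF^{-1}(\vp_n(t,\cdot))$ is the cutoff solution from Proposition~\ref{cutoff-existence}; recall that $F_t^{(n)}$ is a probability measure for every $t$, that its total mass is conserved, and that, by momentum conservation for the cutoff Maxwellian operator, it keeps zero mean. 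Recall also that $F_t$ is recovered, in the sense of \eqref{uniform-convergence}, as the limit of a subsequence of $\{F_t^{(n)}\}$.

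The analytic core is a pointwise bound. Since $v'-v=\tfrac12|v-v_*|\big(\sigma-\tfrac{v-v_*}{|v-v_*|}\big)$ gives $|v'-v|=|v-v_*|\sin(\theta/2)$, since $|v'|\le|v|+|v_*|$ forces $\la v'\ra\le\la v\ra+\la v_*\ra$ and likewise $\la w\ra\le\la v\ra+\la v_*\ra$ for every $w$ on the segment $[v,v']$, and since $|\nabla_w\la w\ra^\alpha|\le\alpha\la w\ra^{\alpha-1}$, the mean value theorem yields, for $\alpha\ge1$,
\[
\big|\la v'\ra^\alpha-\la v\ra^\alpha\big|\le\alpha\,(\la v\ra+\la v_*\ra)^{\alpha-1}|v'-v|\le\alpha\,2^{\alpha-1}\big(\la v\ra^\alpha+\la v_*\ra^\alpha\big)\sin(\theta/2)\,.
\]
Integrating over $\sigma\in\SS^2$ and observing that, with $\alpha_0=1$, the assumption \eqref{index-sing} is exactly $\int_0^{\pi/2}\sin(\theta/2)\,b(\cos\theta)\sin\theta\,d\theta<\infty$, we get a constant $C_0=C_0(\alpha,b)$ with
\[
\int_{\SS^2}b\Big(\tfrac{\xi\cdot\sigma}{|\xi|}\Big)\big|\la v'\ra^\alpha-\la v\ra^\alpha\big|\,d\sigma\le C_0\big(\la v\ra^\alpha+\la v_*\ra^\alpha\big)\,,
\]
and the same estimate, with the \emph{same} $C_0$, holds with $b$ replaced by any $b_n$.

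Next I would run the Gronwall argument at the cutoff level. First, $m_\alpha^{(n)}(t)<\infty$ for every $t$: testing the weak (conservative) form of the $b_n$-equation against the bounded functions $\psi_R(v)=\la v\ra^\alpha\wedge R$ and using the crude bound $\psi_R(v')\le 2^{\alpha-1}(\psi_R(v)+\psi_R(v_*))$ (from $\la v'\ra\le\la v\ra+\la v_*\ra$) together with $\gamma_{2,n}<\infty$ gives $\tfrac{d}{dt}\!\int\psi_R\,dF_t^{(n)}\le 2^{\alpha}\gamma_{2,n}\!\int\psi_R\,dF_t^{(n)}$, hence $\int\psi_R\,dF_t^{(n)}\le m_\alpha(0)e^{2^\alpha\gamma_{2,n}t}$, and $R\to\infty$ by monotone convergence gives $m_\alpha^{(n)}(t)<\infty$ (one may instead get this from the Picard iterates defining $F^{(n)}$ via \eqref{integral-eq}). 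With finiteness in hand, the weak form may be used with the test function $\la v\ra^\alpha$ itself, its right-hand side converging absolutely by the displayed bound, so that
\[
\frac{d}{dt}m_\alpha^{(n)}(t)=\int\!\!\int\!\!\int_{\SS^2}b_n\Big(\tfrac{\xi\cdot\sigma}{|\xi|}\Big)\big(\la v'\ra^\alpha-\la v\ra^\alpha\big)\,d\sigma\,dF_t^{(n)}(v)\,dF_t^{(n)}(v_*)\le 2C_0\,m_\alpha^{(n)}(t)\,,
\]
using the pointwise estimate, the symmetry $v\leftrightarrow v_*$ and $\int dF_t^{(n)}=1$. Gronwall then gives $m_\alpha^{(n)}(t)\le e^{2C_0t}m_\alpha(0)$ with $C_0$ independent of $n$.

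Finally, along the convergent subsequence $F_t^{(n)}\to F_t$ weakly (by \eqref{uniform-convergence} and L\'evy's continuity theorem), since $v\mapsto\la v\ra^\alpha$ is continuous and nonnegative one has $m_\alpha(t)\le\liminf_{n\to\infty}m_\alpha^{(n)}(t)\le e^{2C_0t}m_\alpha(0)$, i.e.\ \eqref{prop-moment} with $C=\max\{1,2C_0\}$; together with zero mean (which passes to the limit from the cutoff solutions) this shows $F_t\in P_\alpha(\RR^3)$. The one genuinely delicate point is the bookkeeping that legitimizes differentiating $m_\alpha^{(n)}(t)$ — namely the preliminary finiteness and the admissibility of $\la v\ra^\alpha$ as a test function — which is handled by the truncation $\la v\ra^\alpha\wedge R$ as above (or via the Picard scheme); the entire analytic content sits in the two displayed Povzner-type inequalities, where the hypothesis $\alpha_0=1$ in \eqref{index-sing} enters precisely to make $\int_0^{\pi/2}\sin(\theta/2)b(\cos\theta)\sin\theta\,d\theta$ finite.
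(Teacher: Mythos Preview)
Your proposal is correct and follows essentially the same route as the paper: cutoff approximations $b_n$, a Povzner-type bound $|\la v'\ra^\alpha-\la v\ra^\alpha|\lesssim(\la v\ra^\alpha+\la v_*\ra^\alpha)\sin(\theta/2)$ exploiting $\alpha_0=1$ in \eqref{index-sing}, a Gronwall argument yielding a uniform-in-$n$ bound, and passage to the limit. The only cosmetic differences are that the paper uses the smooth truncation $W_\delta(v)=\la v\ra^\alpha/(1+\delta\la v\ra^\alpha)$ rather than your $\la v\ra^\alpha\wedge R$ for the preliminary finiteness step, and passes to the limit via a spatial cutoff $\chi(v/m)$ together with $\cS'$-convergence rather than your lower semicontinuity under weak convergence of measures; both choices work equally well.
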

\begin{proof}
For the simplicity of the notations, we consider the case where $F_0, F_t$ have  density functions $f_0(v), f(t,v)$. 
For $\delta >0$, put 
\[W_\delta(v) = \frac{\la v \ra^\alpha}{1+\delta \la v\ra^\alpha}\,.
\]
Since $x/(1+\delta x)$ is increasing in $[1,\infty]$ and $|v'| \le |v| +|v_*|$,  we have
\begin{align*}
W_{\delta}(v') &\lesssim \frac{\la v\ra^\alpha + \la v_*\ra^\alpha}{1+\delta (\la v\ra^\alpha + \la v_*\ra^\alpha)}
= \frac{\la v\ra^\alpha }{1+\delta (\la v\ra^\alpha + \la v_*\ra^\alpha)}
+\frac{ \la v_*\ra^\alpha}{1+\delta (\la v\ra^\alpha + \la v_*\ra^\alpha)}\\
&\le \frac{\la v\ra^\alpha }{1+\delta \la v\ra^\alpha }
+\frac{ \la v_*\ra^\alpha}{1+\delta \la v_*\ra^\alpha} = W_{\delta}(v)+W_{\delta}(v_*)\,.
\end{align*}
Therefore,
$|W_{\delta}(v') - W_{\delta}(v)| \lesssim W_{\delta}(v)+W_{\delta}(v_*) $. If $b_n(\cos \theta) = \min \{b(\cos \theta), n \} $ and $f^n(t,v)$ is the
unique solution of the corresponding Cauchy problem, then 
we have
\begin{align*}
\frac{d }{dt} \int f^n(t,v)W_{\delta}(v) dv \lesssim 
\Big(\int b_n d\sigma \Big) \Big(\int f^n(t,v)W_{\delta}(v) dv \Big) \Big(\int f^n(t,v_*)dv_* \Big)\,,
\end{align*}
which yields 
\[
\int f^n(t,v)W_{\delta}(v) dv \le C_n e^{C_n t} \int f_0(v)W_{\delta}(v) dv.
\]
Taking the limit $\delta \rightarrow +0$, we have $f^n(t,v) \in L^1_\alpha$. 
Note that
\[
\la v' \ra^\alpha - \la v \ra^\alpha \lesssim \big(\la v  \ra^{\alpha-1} +  \la v_*  \ra^{\alpha-1}\big)
|v-v_*|\sin \frac{\theta}{2},\]
because $\a \ge 1$. 
Then, there exists a constant $C >0$ independent of $n$ such that 
\begin{align*}
\frac{d }{dt} \int f^n(t,v)\la v \ra^\alpha  dv &\le 
\Big(\int b_n \theta d\sigma \Big) \Big(\int f^n(t,v) \la v \ra^\alpha dv \Big) \Big(\int f^n(t,v_*)dv_* \Big)\\
&\le C \int f^n(t,v) \la v \ra^\alpha dv\,.
\end{align*}
We have 
$\int f^n(t,v)\la v \ra^\alpha  dv \le C e^{Ct} \int f_0(v)\la v \ra^\alpha  dv$.
Take a cutoff function $\chi(v)$ in $C_0^\infty (\RR^3)$ satisfying 
$0\le \chi \le 1$ and  
$\chi = 1$ on $\{|v| \le 1\}$.
Then, for any $m \in \NN$,   we have 
$$\int f^n(t,v)\la v \ra^\alpha\chi\left(\frac{v}{m}\right)   dv \le C e^{Ct} \int f_0(v)\la v \ra^\alpha  dv. $$
Since $f^n(t,v)\rightarrow f(t,v)$ in $\cS'(\RR^3_v)$ and $\la v \ra^\alpha \chi\left(\frac{v}{m}\right)  \in \cS$, we get
 $$\int f(t,v)\la v \ra^\alpha \chi\left(\frac{v}{m}\right)     dv \le C e^{Ct} \int f_0(v)\la v \ra^\alpha  dv. $$
 Letting $m \rightarrow  \infty$ gives the desired estimate \eqref{prop-moment}. 
And this completes the proof of the proposition.
 \end{proof}

\section{Proof of Theorem \ref{smoothing-thm}}\label{s3}
\setcounter{equation}{0}

In this section, we will show that the measure valued solutions obtained
in both the function spaces $\cK^\alpha$ and $\cM^\alpha$ is $H^\infty_v$ for
any positive time, { under the  angular singularity assumption \eqref{1.2} on the cross section.}

For this, we first recall that in the proof of Theorem 1.3 of \cite{MY}, we 
already showed that, if $F_0 \in \tilde P_\a(\RR^3)$, then there exists a $T>0$ such that
the unique solution $f(t,v) \in  C([0,\infty), \tilde P_{\alpha})$ ($\alpha >2s$)  satisfies 
\[
f(t,v) \in H^\infty(\RR^3) \enskip \mbox{for any fixed} \enskip  t \in (0,T]\,.
\]

This local smoothing effect was extended to the global one in the case when the initial datum belongs to $P_2$,  by using the energy 
conservation law and the uniform boundedness of the entropy norm (see (1.12) of \cite{MY}). 
 
In order to prove the global in time smoothing effect even for infinite
energy solutions, instead of (1.12) in \cite{MY}, we will show that for any $T_1 > T$, there exists a $C_{T_1} >0$ such that 
\begin{align}\label{semi-global-uniform}
\sup_{T \le t \le T_1} \Big(\|f(t)\|_{L^1_{\alpha'} }+ \|f(t)\|_{L \log L}\Big) < C_{T_1},
\end{align}
for any $\alpha' < \alpha$.   Based on  \eqref{semi-global-uniform},
 we will then show the global in time smoothing effect, that is, $f(t,v) \in H^\infty(\RR^3)$ for any 
$t >T$.
 
The bound of the first term in \eqref{semi-global-uniform} is now clear. In fact, 
by using \eqref{moment-es} and \eqref{trivial-estimate},
we have 
\begin{align}\label{general-mome}
\||v|^{\a'}f(t)\|_{L^1 } \le    \frac{C_{\a',\a,3}}{2c_{\alpha',3, \infty}}\big(\|\varphi(t)-1\|_\a\big)^{\a'/\a} \le
 \frac{C_{\a',\a,3}}{2c_{\alpha',3, \infty}}\big(e^{\lambda_\a t}\|\varphi_0-1\|_\a \big)^{\a'/\a}\,,
\end{align}
where we have used \eqref{alpha-stability} to get the second inequality.
%
It remains to show the boundedness of the second term in
\eqref{semi-global-uniform}. 

It follows from the local smoothing effect that
$\int f(T,v) \log (1+ f(T,v) )dv < \infty$. Writing $0$ and $\a$ instead of $T$ and $\alpha'$, respectively,
for simplicity,  we show the following;

\begin{prop}\label{entropy-finite}
Let $b(\cos \theta)$ satisfy \eqref{1.2} with $0<s<1$. Let $2s< 
\alpha<2$. 
Assume that $0 \le f_0 \in L^1_{\alpha}(\RR^3)  \cap  L \log L(\RR^3)$ and $\int f_0(v) dv =1$. 
If $f(t,v)$ is the unique solution in $C([0,\infty), \tilde P_{\alpha})$   of the Cauchy problem  \eqref{bol}-\eqref{initial}, then $f(t,v) \in L^\infty_{loc}([0,\infty), L^1_{\alpha}(\RR^3)  \cap  L \log L(\RR^3))$, that is, 
for any $T_1 >0$ there exists a $C_{T_1} >0$ such that 
\begin{align}\label{boundedness-234}
\sup_{0< t \le T_1} \int \la v \ra^{\alpha} f(t,v)   dv + \int f(t,v) \log \big( 1 +  f(t,v) \big) dv \le C_{T_1}\,.
\end{align}
\end{prop}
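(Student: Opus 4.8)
The plan is to control the two quantities in \eqref{boundedness-234} separately, the moment term being essentially already handled and the entropy term being the real point. For the $L^1_\alpha$ bound, one follows the computation in Proposition \ref{propagation-moments}: approximate $b$ by $b_n = \min\{b,n\}$, work with the corresponding smooth solution $f^n$, use that for $\alpha \in (2s,2)$ the condition \eqref{index-sing} is satisfied with some $\alpha_0 < \alpha$, and exploit the inequality $\langle v'\rangle^\alpha - \langle v\rangle^\alpha \lesssim (\langle v\rangle^{\alpha-1} + \langle v_*\rangle^{\alpha-1})|v-v_*|\sin(\theta/2)$ together with $\int b_n(\cos\theta)\sin^{\alpha_0}(\theta/2)\sin\theta\,d\theta < \infty$; then a Gronwall argument gives $\|f^n(t)\|_{L^1_\alpha} \le Ce^{Ct}\|f_0\|_{L^1_\alpha}$ with $C$ independent of $n$, and one passes to the limit using $f^n \to f$ in $\mathcal S'$ against the truncated weights $\langle v\rangle^\alpha\chi(v/m)$. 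Alternatively, since $\mathcal F(f_0) \in \mathcal M^\alpha$ when $f_0 \in L^1_\alpha$, the estimate \eqref{general-mome} already delivers the bound on $\||v|^{\alpha'}f(t)\|_{L^1}$ for every $\alpha' < \alpha$, which is all that is needed.

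The entropy bound is where the work lies. The natural route is to test the Boltzmann equation against $\log(1 + f(t,v))$ and use the entropy dissipation structure. For the cutoff-approximate solutions $f^n$, the standard Boltzmann $H$-theorem computation gives
\[
\frac{d}{dt}\int f^n \log(1+f^n)\, dv \le \text{(entropy production terms)} + \text{(lower order terms)},
\]
where the genuinely dissipative piece has a favorable sign, and the remaining terms should be controlled by $\|f^n(t)\|_{L^1_\alpha}$ together with the total mass, which is conserved and equal to $1$. The key algebraic fact to exploit is that $\log(1+x)$ is concave and subadditive, so that $\log(1+f(v')) + \log(1+f(v_*')) - \log(1+f(v)) - \log(1+f(v_*))$ can be handled by the usual entropy inequality $(a-b)(\log(1+a) - \log(1+b)) \ge 0$, while the error caused by using $\log(1+f)$ rather than $\log f$ is integrable because of the $L^1$ and $L\log L$ control, and the $\Phi = |z|^\gamma$ factor is simply $1$ here since $\gamma = 0$, so no velocity growth enters the collision kernel. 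One then obtains a differential inequality $\frac{d}{dt}\int f^n\log(1+f^n)\,dv \le C(1 + \int f^n\log(1+f^n)\,dv)$ with $C$ depending on $\sup_{[0,T_1]}\|f^n(t)\|_{L^1_\alpha}$ but not on $n$, and Gronwall plus the uniform $L^1_\alpha$ bound from the first step closes the estimate uniformly in $n$.

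Finally, passing to the limit $n \to \infty$: one has $f^n(t) \to f(t)$ in $\mathcal S'$ (indeed the convergence \eqref{uniform-convergence} of the Fourier transforms on compacts yields weak convergence of the measures), the $L^1_\alpha$ norms are uniformly bounded hence $f^n(t) \to f(t)$ weakly in $L^1$ with no loss of mass, and the functional $g \mapsto \int g\log(1+g)\,dv$ is convex and lower semicontinuous with respect to weak $L^1$ convergence, so $\int f(t)\log(1+f(t))\,dv \le \liminf_n \int f^n(t)\log(1+f^n(t))\,dv \le C_{T_1}$. Combining with the moment bound gives \eqref{boundedness-234}.

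The main obstacle I expect is making the entropy-production estimate for the cutoff solutions fully rigorous with constants independent of $n$: one must verify that the non-sign-definite terms arising from replacing $\log f$ by $\log(1+f)$, from the angular integration, and from the unboundedness of $v$ (which is why one needs $L^1_\alpha$ rather than just $L^1$) really are dominated by $C(1 + \int f^n\log(1+f^n)\,dv)$ with $C$ uniform in $n$; this requires a careful splitting of the $v$-$v_*$ integral into regions where $f^n$ is large versus small and a precise use of the logarithmic weight, in the spirit of the entropy-bound arguments that underlie (1.12) of \cite{MY}.
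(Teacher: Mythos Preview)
Your moment-bound argument is fine and matches the paper's (which handles $\alpha\ne 1$ through the $\cM^\alpha$ machinery and $\alpha=1$ via Proposition~\ref{propagation-moments}).  The entropy bound is where your route diverges from the paper's, and where there is a gap.

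The paper does \emph{not} cut off the cross section for this step; it truncates the \emph{initial datum}.  Setting $\tilde f_{0,m}(v)=c_m\chi(v/m)f_0(v)$ (and recentering to zero mean when $\alpha\ge 1$) produces $f_{0,m}\in L^1_2(\RR^3)\cap L\log L(\RR^3)$ with \emph{finite energy}.  For such data Villani's $H$-theorem \cite{villani} applies to the full non-cutoff problem and gives the exact identity
\[
\int f_m(t)\log f_m(t)\,dv + \int_0^t D(f_m)(s)\,ds = \int f_{0,m}\log f_{0,m}\,dv,
\]
so the entropy of $f_m(t)$ is \emph{non-increasing}---no Gronwall is needed at all.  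The negative part of $\int f_m\log f_m$ is then controlled by the uniform $L^1_\alpha$ bound via the elementary inequality $x\log x\ge -y+x\log y$ with $y=e^{-\langle v\rangle^\alpha}$, Dunford--Pettis gives weak $L^1$ compactness, the stability estimate in $\|\cdot\|_{\alpha'}$ (after showing $\|\hat f_{0,m}-\hat f_0\|_{\alpha'}\to 0$) identifies the weak limit as $f(t)$, and lower semicontinuity of the convex functional $g\mapsto\int g\log(1+g)$ finishes.

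Your plan keeps the infinite-energy $f_0$, cuts off $b$ to $b_n$, and tests against $\log(1+f^n)$.  Two things go wrong.  First, the rigorous $H$-theorem you would like to lean on is stated in the literature for $L^1_2$ data, so with only $L^1_\alpha$ ($\alpha<2$) you cannot simply quote it for $f^n$.  Second, and more concretely, the dissipative sign of the collision operator is attached to $\log f$, not $\log(1+f)$: writing $g'(x)=\log(1+x)+\tfrac{x}{1+x}$, the weak form of $\int Q_{b_n}(f^n,f^n)\,g'(f^n)\,dv$ contains the bounded piece $\tfrac{x}{1+x}$, whose contribution is controlled only by $\gamma_{2,n}=\int_{\SS^2}b_n\,d\sigma$, and this blows up as $n\to\infty$.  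So the differential inequality you aim for does not come with an $n$-independent constant by the mechanism you sketch.  You flag this obstacle yourself but do not resolve it; the paper's device of truncating the initial data to force finite energy is precisely what removes it, since one then uses the genuine entropy $\int f_m\log f_m$ (which is monotone) rather than a surrogate.
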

\begin{proof}
Since we  assume $f_0 \in L^1_\a(\RR^3)$, there exists $C >0$ such that 
\begin{equation}\label{mome-again-1}
 \int \la v \ra^{\alpha} f(t,v)   dv \le C e^{Ct}\int \la v \ra^{\alpha} f_0(v)   dv.
\end{equation}
In fact, when $\a \ne 1$, 
it follows from \eqref{moment-es}, \eqref{04} and \eqref{fourier-int} that
\begin{align}\label{better-mome}
\||v|^\a f(t)\|_{L^1 } \le \frac{1}{2 c_{\a,3,\infty}}\|\varphi(t)-1\|_{\cM^\a} \le   \frac{ e^{\lambda_\a t}}{2 c_{\a,3,\infty}} \|\varphi_0-1\|_{\cM^\a}
\lesssim e^{\lambda_\a t} \|f_0\|_{L^1_{\alpha} } \,.
\end{align}
The exceptional case $\a =1$ follows from
Proposition \ref{propagation-moments}, 
because of $2s < \a =1$.
We will show the boundedness  of the second term in \eqref{boundedness-234}.
Take the same cutoff function $\chi(v)$ in $C_0^\infty(\RR^3)$ as in the proof of Proposition \ref{propagation-moments}.
For $m \in \NN$, put $\tilde f_{0,m}(v) = c_m \chi\left(\frac{v}{m}\right) f_0(v)$ with  $c_m = 1/ \int \chi\left(\frac{v}{m}\right)f_0(v)dv$.
Since 
$f_0(v) \in L^1_{\alpha}(\RR^3)$, there exists $M \in \NN$
such that  $1 \le  c_m \le 2$ for all $ m \ge M$.  

We consider only the case $s \ge 1/2$ because the case when $0<s<1/2$ is easier.
For 
$a_m = \int v \tilde f_{0,m}(v) dv$,  put $f_{0,m}(v) = \tilde f_{0,m}(v+a_m)$. 
Since $|a_m|^\alpha \le 2 \int \la v \ra^\alpha f_0 dv$,  it follows from Lemma 3.15  of \cite{Cannone-Karch-1}
 that $\{f_{0,m}\}$ belongs to a bounded set of $\tilde P_\alpha$,
equivalently, $\{\hat f_{0,m}\}$ belongs to a bounded set of $\cK^\alpha$,  that is,
\begin{align}\label{3-13}
\|1- \hat f_{0,m}\|_\alpha \lesssim \int \la v \ra^{\alpha} f_0(v) dv \,.
\end{align}
Consider the  solutions $f_m(t,v)$ for the Cauchy problem 
with the initial data $f_{0,m}(v)$ with $m \ge M$. Since $f_{0,m} \in L^1_2(\RR^3) \cap L \log L(\RR^3)$, we have 
by Theorem 1 of \cite{villani} that 
\begin{align}\label{H-theorem}
\int f_m(t,v) \log f_m(t,v) dv  + \int_0^t D(f_m)(s) ds =  \int f_{0,m}(v) \log f_{0,m}(v) dv\,, 
\end{align}
where $D(f)(t)$ is defined by 
\[
D(f) (t) = \frac{1}{4}\iiint_{\RR^3 \times \RR^3 \times \SS^2} b(f'_*f' -f_* f) \log \frac{f'_*f'}{ f_* f}dvdv_*d\sigma \ge 0,
\]
with $f = f(t,\cdot)$. Therefore, writing $\chi_m = \chi(v/m)$, we have 
\begin{align*}
&\int f_m(t,v) \log f_m(t,v) dv  \leq \int f_{0,m}(v) \log f_{0,m}(v) dv\\
&\le 2\log 2 \int f_0 dv  + 2  \int \chi_m f_0 \log^+  \chi_m f_0  dv.
\end{align*}
Noting  that $x \log x \ge -y + x \log y$ $(x \ge 0, y >0)$, we have
\begin{align*}
0 \ge f_m(t,v) \log^-f_{m}(t,v) \ge - e^{-\la v \ra^{\alpha}} - \la v \ra^{\alpha}   f_{m}(t,v),
\end{align*}
which together with \eqref{mome-again-1} yields,
\begin{align*}
&
\int f_m(t,v) \log^+ f_m(t,v) dv  
\le 2 \log 2 \int f_0 dv \\
& \quad + 2 \int f_{0}(v) \log^+  f_{0}(v) dv  +\int e^{-\la v \ra^{\alpha} } dv  +  4C e^{Ct} \int  \la v \ra^\alpha f_{0}(v) dv, 
\end{align*} 
because $\int \la v\ra^\a f_{0,m} dv \le 4 \int \la v\ra^\a f_{0} dv$.
Thus, for any $ T>0$ there exists $C'_T >0$ such that
\begin{align*}
\sup_{0 < t \le T}\Big( \int  \la v \ra^{\alpha}  f_m(t,v) dv+ \int f_m(t,v) \log (1+ f_m(t,v) )dv \Big)  \le C'_T\,,
\end{align*}
which concludes the weak compactness of $\{ f_m \}$ in $L^1(\RR^3)$, by means of Dunford-Pettis criterion.
Notice  \eqref{3-13} again,  namely the fact that  $\hat f_{0,m}, \hat f_0 \in \cK^{\alpha}$ uniformly with respect to $m$. 
Take a $\a' \in (2s,\a)$.  For any $\delta >0$  we have 
\begin{align*}
\sup_{0<|\xi| < \delta} \frac{|\hat f_{0,m} (\xi) - \hat f_{0}(\xi)|}{|\xi|^{\alpha'}} &\leq  \delta^{\alpha-\alpha'} \Big( \sup_{0<|\xi| < \delta} \frac{|\hat f_{0,m} (\xi) - 1|}{|\xi|^{\alpha}}
+ \sup_{0< |\xi| < \delta} \frac{|1 - \hat f_{0}(\xi)|}{|\xi|^\alpha}\Big)\\
&\lesssim  \delta^{\alpha - \alpha'} \int \la v \ra^{\alpha} f_0(v)dv\,.
\end{align*}
Note that for any  $R >0$ large enough, we have
\begin{align*}
\sup_{|\xi| >  R} \frac{|\hat f_{0,m} (\xi) - \hat f_{0}(\xi)|}{|\xi|^{\alpha'}} \le2 R^{-\alpha'} \,.
\end{align*}
In view of \eqref{3-13},  it follows from Lemma 2.1 of \cite{morimoto-12}  that $\{\hat f_{0,m}\}$ is uniformly equi-continuous on the compact set $\{\delta \le |\xi| \le R\}$.
By the Ascoli-Arzel\'a theorem, $\{\hat f_{0,m}\}$ is a Cauchy sequence in $\|\cdot \|_{\alpha'}$, taking a subsequence if necessary.  Since $f_{0,m}(v) \rightarrow f_0(v)$ in $\cS'(\RR^3)$ as $m$ tends to  $\infty$,
we concludes $\|\hat f_{0,m}-  \hat f_{0}\|_{\alpha'} \rightarrow 0$.

It follows from  \eqref{alpha-stability}  that
\[
\|\hat f_m(t,\cdot) - \hat f(t,\cdot)\|_{\alpha'} \le e^{\lambda_{\alpha'} t}\|\hat f_{0,m} - \hat f_0\|_{\alpha'},
\]
which implies that $\hat f_m(t,\xi) \rightarrow \hat f(t,\xi)$ everywhere in $\xi$ for any fixed $t>0$.
Since 
$$|\hat f_m(t,\xi)|,\quad |\hat f(t,\xi)| \le 1,$$
we know 
$f_m(t,v) \rightarrow f(t, v)$ in $\cS'(\RR^3)$.
If we recall the weak compactness of $\{f_m(t,\cdot)\}$ in $L^1(\RR^3)$, then we obtain 
\[
f_m(t,\cdot)  \rightarrow f(t,\cdot) \enskip \mbox{weekly in } \enskip L^1(\RR^3).
\]
Since $\lambda \log(1+ \lambda)$ is convex, for $0<t\le T$, we have 
\[
\int f(t,v) \log (1+f(t,v)) dv \le \lim \inf \int f_m(t,v) \log (1+f_m(t,v))dv  \le C'_T.
\]
And this completes the proof of the proposition.
\end{proof}

Since the global in time smoothing effect has been established, that is, $f(t,v) \in H^\infty(\RR^3)$ for any $t > 0$, 
it follows from Theorem \ref{existence-base-space} together with Remark \ref{remark-1} that
$f(t,v) \in L^1_\a(\RR^3)$ for $t >0$ if $F_0 \in P_\a(\RR^3)$. 
To complete the proof of Theorem \ref{smoothing-thm} , it remains to show $f(t) \in$ $ C((0, \infty), L^1_\a(\RR^3)\cap H^\infty(\RR^3))$ if 
$F_0 \in P_\a(\RR^3)$ and $\a \ne 1$.  
Let  $0 < T_1 < T_2$,
Since it follows from \eqref{second} that
\[
\|\varphi(t) - \varphi(t_0)\|_{\cM^\a} \lesssim |t-t_0|\, , \enskip t, t_0 \in [T_1, T_2],
\]
by the same argument as the one given in the last paragraph of Section \ref{s2},
we see that
 for any $\varepsilon >0$ there exist $R >1$ and $\delta >0$ such that 
\begin{align}\label{uniform-mome-decay}
\int_{\{|v| \ge R \}} |v|^\alpha f(t,v)dv 
< \varepsilon\, \enskip \mbox{if $|t-t_0| <\delta$}.
\end{align}
Then  for any $M >1$,  we have 
\begin{align*}
 |f(t,v) -f(t_0,v)|  &\le \int |\varphi(t,\xi)-\varphi(t_0,\xi)|d\xi \\
& \le  2\sup_{\tau \in [T_1, T_2]}\|f(\tau)\|_{H^2(\RR^3)} \Big(\int_{\{|\xi| \ge M\}} \la \xi \ra^{-4} d\xi \Big)^{1/2} \\
& \quad + \|\varphi(t) -\varphi(t_0)\|_\a \int_{\{|\xi| < M\}} |\xi|^{\a} d\xi\,, 
\end{align*}
which together with \eqref{uniform-mome-decay} imply that
 $f(t) \in C((0,\infty), L^1_\a(\RR^3))$. 
In view of \eqref{semi-global-uniform}, it follows from the proof of Theorem 1.5 in \cite{morimoto-12} that
for any $N \in \NN$ there exists a $ C_N >0$ such that
\[
\sup_{T_1 \le \tau \le T_2 } \int |\la \xi \ra^{N+1} \varphi(\tau, \xi)|^2 d \xi \le C_N.
\]
Noticing that for any $R >1$
\begin{align*}
\|f(t) - f(t_0) \|^2 _{H^N} \le  (1+ R^2)^N \int_{\{|\xi| \le R\} }|\varphi(t,\xi) -\vp(t_0,\xi)|^2 d \xi + 4 C_N R^{-2},
\end{align*}
we have $f(t) \in  C((0, \infty),  H^N(\RR^3))$ because of $\vp(t) \in C([0, \infty), \cK^\a)$.
\bigskip


\bigskip
\noindent
{\bf Acknowledgements:} The research of the first author was supported in part 
by  Grant-in-Aid for Scientific Research No.25400160,
Japan Society of the Promotion of Science. The research of the third author was
supported in part by the General Research Fund of Hong Kong, CityU No.104511,
and the Shanghai Jiao Tong University.



\begin{thebibliography}{99}




\bibitem{ADVW} R. Alexandre, L. Desvillettes, C. Villani and B. Wennberg,
Entropy  dissipation and long-range interactions,  {\it Arch.
Rational Mech. Anal.} {\bf 152} (2000),  327-355.
%
%
%
%
%
%
%
%
%




%
%
%
%


\bibitem{Cannone-Karch-1} M. Cannone and G. Karch,
{Infinite energy solutions to the homogeneous Boltzmann equation},
{\it Comm. Pure Appl. Math.} {\bf 63} (2010), 747-778. 



\bibitem{Carlen-Gabetta-Toscani} E. A. Carlen, E. Gabetta
 and G. Toscani, Propagation of smoothness and the rate of exponential convergence to equilibrium for a spatially homogeneous Maxwellian gas, {\it Comm. Math. Phys.},
{\bf 199} (1999) 521-546.
%

%
%
%
%
%
%
%

\bibitem{Gabetta-Toscani-Wennberg}
E. Gabetta, G. Toscani and B. Wennberg, Metrics for probability distributions and the rend to equilibrium for solutions of the Boltzmann equation, {\it J. Statist. Phys}, {\bf 81}, 901-934.



%
%
%
%

\bibitem{morimoto-12}Y. Morimoto, 
A remark on Cannone-Karch solutions 
to the homogeneous Boltzmann equation for Maxwellian molecules, 
{ \it Kinetic and Related Models}, {\bf 5} (2012), 551-561.

\bibitem{MUXY-DCDS}
Y. Morimoto, S. Ukai, C.-J. Xu and T. Yang, {Regularity of solutions to the
spatially homogeneous Boltzmann equation without angular cutoff},
{\it Discrete and Continuous Dynamical Systems -
 Series A} {\bf 24} (2009), 187--212.
%

\bibitem{MY} Y. Morimoto and T. Yang, 
Smoothing effect of the homogeneous Boltzmann equation with measure valued initial datum, { \it  to appear 
in Ann. Inst. H. Poincar\'e Anal. Non Lin\'eaire}.

%
%
%
%
%
%
%
%
%
%
%
%
%
%
%
%



\bibitem{toscani-villani}G. Toscani and C. Villani, Probability metrics and uniqueness of the solution to the Boltzmann equations for Maxwell gas, {\it J. Statist. Phys.,} {\bf 94} (1999), 619-637.









\bibitem{villani}C. Villani, {On a new class of weak solutions to the spatially
homogeneous Boltzmann and Landau equations}, {\it Arch. Rational
 Mech. Anal.,} {\bf 143} (1998), 273--307.


\bibitem{villani2} C. Villani, A review of mathematical
topics in collisional kinetic theory. In: Friedlander S.,
Serre D. (ed.),
Handbook of Fluid Mathematical Fluid Dynamics, Elsevier Science   (2002).



\bibitem{villani3} C. Villani,
Topics in optimal transportation. Graduate Studies in Mathematics, {\bf 58}. American Mathematical Society, Providence, RI, (2003) .  

\end{thebibliography}
\end{document}